\documentclass[11pt]{article}

\usepackage{amssymb, enumerate}
\usepackage{amsfonts}
\usepackage{amsmath}

 \setlength{\textwidth}{15cm}
 \setlength{\textheight}{23cm}
 \setlength{\topmargin}{-1cm}
 \oddsidemargin 0.5cm

\newcommand{\R}{\mathbb R}
\newcommand{\C}{\mathbb{C}}
\newcommand{\Z}{\mathbb{Z}}
\newcommand{\N}{\mathbb{N}}
\newcommand{\Q}{\mathbb{Q}}
\newcommand{\A}{\mathbb{A}}
\newcommand{\K}{\mathbb{K}}

\newcommand{\cA}{\mathcal{A}}
\newcommand{\cO}{\mathcal{O}}
\newcommand{\cU}{\mathcal{U}}

\newtheorem{proposition}{Proposition}
\newtheorem{theorem}[proposition]{Theorem}

\newtheorem{remark}[proposition]{Remark}
\newtheorem{corollary}[proposition]{Corollary}
\newtheorem{lemma}[proposition]{Lemma}

\newenvironment{proof}{
\trivlist \item[\hskip \labelsep\mbox{\it Proof:
}]}{\hfill\mbox{$\square$}
\endtrivlist}

\title{Elimination for generic sparse polynomial systems\footnote{Partially supported by the Argentinian research grants
CONICET PIP 0099/11, UBACYT 20020090100069 (2010-2013) and UBACYT 20020120100133 (2013-2016).}}
\author{Mar\'\i a Isabel Herrero$^{\sharp}$, Gabriela Jeronimo$^{\sharp,\dag,\diamond}$, Juan
Sabia$^{\dag,\diamond}$\\[5mm]
{\small $\sharp$ Departamento de Matem\'atica, Facultad de Ciencias
Exactas y
Naturales,} \\[-1mm] {\small Universidad de Buenos Aires, Ciudad
Universitaria, (1428) Buenos Aires, Argentina}\\[2mm]
{\small $\dag$ Departamento de Ciencias Exactas, Ciclo B\'asico
Com\'un,}\\[-1mm]
{\small Universidad de Buenos Aires, Ciudad Universitaria, (1428)
Buenos Aires, Argentina}\\[2mm]
{\small $\diamond$ IMAS, CONICET, Argentina }}

\begin{document}

\maketitle

\begin{abstract}
We present a new probabilistic symbolic algorithm that, given a variety defined in an $n$-dimensional affine space by a generic sparse system with fixed supports, computes the Zariski closure of its projection to an $\ell$-dimensional coordinate affine space with $\ell<n$. The complexity of the algorithm depends polynomially on combinatorial invariants associated to the supports.
\end{abstract}

\section{Introduction}

Let $\cA=(\cA_1,\dots, \cA_r)$ be a family of finite subsets of $(\Z_{\ge 0})^n$ and $\mathbf{f}=(f_1,\dots, f_r)$ a system of polynomials in $\Q[X_1,\dots, X_n]$ supported on $\cA$. If $V(\mathbf{f}) \subset \C^n$ denotes the affine variety of the common zeros of the polynomials in $\mathbf{f}$ and, for a given $\ell<n$, $\pi: \C^n \to \C^\ell$ is the projection $\pi(x_1,\dots, x_n) = (x_1,\dots, x_\ell)$, we consider the problem of computing algorithmically a description of the Zariski closure $\overline{\pi(V(\mathbf{f}))}\subset \C^\ell$ within a complexity depending on combinatorial invariants associated to the input supports.

The computation of (Zariski closures of) linear projections of varieties is the basic task in elimination theory. A more general formulation of this problem is algorithmic quantifier elimination over algebraically closed fields (see, for instance, \cite{Tarski51}, \cite{Hei83}, \cite{CG84}, \cite{FGM90}, \cite{PS98} for algorithms with complexities depending on the number and degrees of the polynomials and the number of variables involved). Particular instances of the computation of Zariski closures of projections are the computation of Chow forms (see, for instance, \cite{Caniglia90}, \cite{GH91}, \cite{JKSS04}),  classical resultants (see \cite{DD01} and the references therein) and sparse resultants (see, for example, \cite{Stu94}, \cite{CE00}, \cite{DAndrea02}, \cite{JS07}).

The foundations of the study of sparse polynomial systems can be traced back to \cite{Ber75}, \cite{Kho78} and \cite{Kus76}, which prove that the number of isolated roots in $(\C^*)^n$ of a system of $n$ polynomial equations in $n$ unknowns is bounded by the mixed volume of the family of their supports.

This result led to the construction of algorithms for the computation of these isolated solutions, particularly designed to deal with sparse systems (see, for instance, \cite{VVC94}, \cite{HS95}, \cite{Roj03}, \cite{JMSW09}), which run faster than the known general procedures solving this task.
Later on, upper bounds for the number of isolated \emph{affine} solutions of sparse systems were obtained and new efficient algorithms to compute them were designed (see, for instance, \cite{Roj94}, \cite{LW96}, \cite{RW96}, \cite{HS97}, \cite{EV99}, \cite{GLW99}, \cite{HJS10}).
More recently, positive dimensional components of affine varieties defined by sparse systems were also considered: in \cite{Ver09}, certificates for the existence of $1$-dimensional components were given and, in \cite{AV12} and \cite{AV13}, under certain assumptions on the equations, algorithmic methods to describe Puiseux series expansions of curves and arbitrary positive dimensional components, respectively, were presented.
Also, in \cite{HJS}, an upper bound in terms of mixed volumes for the degree of the affine variety defined by a sparse polynomial system of $n$ equations in $n$ unknowns was proved and algorithms for characterizing the equidimensional decomposition of affine varieties defined by sparse systems were designed.
The good performance of the most efficient algorithms dealing with sparse systems relies on the use of polyhedral deformations introduced in \cite{HS95}, because these deformations essentially preserve the monomial structure of the system involved.

In this paper, we present a probabilistic symbolic algorithm which computes  the Zariski closure $\overline{\pi(V(\mathbf{f}))}\subset \C^\ell$ for a sparse system $\mathbf{f}$ with fixed supports and \emph{generic} coefficients. We use the decomposition of the variety $V(\mathbf{f})$ into equidimensional subvarieties contained in coordinate subspaces proved in \cite{HJS} to reduce the problem to the case of a variety such that each of its components intersects the torus. In this case we compute a \emph{geometric resolution} of the variety with respect to a suitably chosen set of free variables and, from this resolution, we show how to obtain a geometric resolution of the Zariski closure of the required projection. The complexity of our algorithm is polynomial in combinatorial invariants associated to the supports of the input polynomials. Our main result is the following:

\begin{theorem}\label{thm:Q-Proj}
Let $\mathbf{f}=(f_1,\dots, f_r)$ be a system of polynomials in $\Q[X_1,\dots, X_n]$ with generic coefficients supported on a family $\cA= (\cA_1,\dots, \cA_r)$ of finite
subsets of $(\mathbb{Z}_{\ge 0})^n$  such that $\dim (\sum_{j\in J} \cA_j )\ge \# J$ for every $J\subset \{1,\dots, r\}$, and let $V^*(\mathbf{f})\subset \C^n$ be the Zariski closure of  $\{ x\in (\C^*)^n \mid f_j(x) = 0 \hbox{ for every } 1\le j \le r\}$. There is a probabilistic algorithm that computes a geometric resolution of $\overline{\pi(V^*(\mathbf{f}))}$, where $\pi: \C^n \to \C^\ell$ is the projection to the first $\ell$ coordinates, within
$O( r \mathcal{D}( n^2  N \log(d)  (\mathcal{D}^2 + \mathcal{E})  +\mathcal{D}^{5} ))$
 operations in $\Q$ up to logarithmic factors, where  $\mathcal{D}=MV(\cA, \Delta^{(n-r)})$, $N= \sum_{1\le j \le r} \# \cA_j$, $d:= \max_j\{ \deg(f_j) \}$ and $\mathcal{E}= \sum_{1\le h \le r} MV( ((\cA_j)_{j\ne h}), \Delta^{(n-r+1)})$. Here, $MV$ denotes mixed volume, $\Delta$ is the vertex set of the standard simplex in $\R^n$ and the superscript indicates the number of repetitions.
\end{theorem}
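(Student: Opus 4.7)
The plan is to compute a geometric resolution of $V^*(\mathbf{f})$ with free variables chosen to interact well with the coordinate projection $\pi$, and then to extract from it a geometric resolution of the image.

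Under the support-dimension hypothesis, a generic specialization of the coefficients makes $V^*(\mathbf{f})$ equidimensional of dimension $n-r$, with degree bounded by $\mathcal{D}=MV(\cA,\Delta^{(n-r)})$ via \cite{HJS}. I would pick $I\subset\{1,\dots,n\}$ of size $n-r$ such that $X_I$ are algebraically independent on every component of $V^*(\mathbf{f})$ and such that $S:=I\cap\{1,\dots,\ell\}$ is maximal (its cardinality is the dimension of the image). Using the polyhedral-deformation symbolic machinery of \cite{HJS10,JMSW09,HJS}, compute a geometric resolution of $V^*(\mathbf{f})$ relative to $X_I$: a primitive element $u=\sum_{i\notin I}\lambda_i X_i$, its minimal polynomial $q\in\Q(X_I)[T]$ of $T$-degree at most $\mathcal{D}$, and parametrizations $X_i\equiv w_i(X_I,T)\pmod{q}$ for each $i\notin I$. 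The cost of this step, accounting for the polyhedral tracking (whose intermediate sizes are controlled by $\mathcal{E}$) and for symbolic Newton-lifting through the $N$ monomial coefficients, matches the first summand of the claimed bound.

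To produce a resolution of $\overline{\pi(V^*(\mathbf{f}))}$, let $J=\{1,\dots,\ell\}\setminus S$ collect the dependent coordinates that lie in the target. Pick a generic linear form $v=\sum_{j\in J}\mu_j X_j$, reduce it to $v\equiv\sum_{j\in J}\mu_j w_j(X_I,T)\pmod{q}$, and then eliminate $T$ and the extra free variables $X_{I\setminus S}$ by iterated resultants. The outcome is the minimal polynomial of $v$ over $\Q(X_S)$ on the image, of degree at most $\mathcal{D}$; performing the analogous elimination on each $w_j$, $j\in J$, yields the parametrizations of the $X_j$ in terms of $v$, completing the geometric resolution of the image. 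The cost of this step is absorbed by the $\mathcal{D}^5$ term.

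The main obstacle is this extraction: one must verify, under genericity, that a single linear form $v$ in $X_J$ is indeed a primitive element of the image, and that the resultant-based elimination does not inflate degrees beyond $\mathcal{D}$. The natural route is to realize $\overline{\pi(V^*(\mathbf{f}))}$ as the Zariski closure of the image of a rational map defined on a variety of degree $\mathcal{D}$, whence its own degree is also bounded by $\mathcal{D}$; translating this geometric fact into a resultant computation that fits in the $\mathcal{D}^5$ budget, while propagating the genericity of the coefficients to all auxiliary choices ($\lambda$, $\mu$, intermediate specializations), is the bulk of the technical work.
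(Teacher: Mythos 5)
There is a genuine gap, and it sits exactly where you say the ``bulk of the technical work'' lies: the extraction of the resolution of the image from the resolution of $V^*(\mathbf{f})$. You keep all $n-r$ free variables $X_I$ and propose to eliminate $T$ together with the superfluous free variables $X_{I\setminus S}$ by iterated resultants. This is not justified and, as stated, does not work within the claimed budget: you are projecting along fibers of positive dimension $n-r-t$, so iterated resultants may vanish identically or, generically, return a proper multiple of the sought minimal polynomial with extraneous factors whose degrees multiply at each of the $n-r-t+1$ elimination steps; the bound ``degree at most $\mathcal{D}$'' applies to the minimal polynomial of $v$ on the image, not to the iterated resultant you actually compute, and the cost is not visibly absorbed by $\mathcal{D}^5$. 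Moreover, ``performing the analogous elimination on each $w_j$'' does not yield the parametrizations $X_j=v_j(\mu)$ required by a geometric resolution: eliminating from $X_j-w_j$ and $q$ gives (a multiple of) the minimal polynomial of $X_j$ over $\Q(X_S)$, not an expression of $X_j$ as a polynomial in the primitive element $\mu$; producing the latter needs either linear algebra in the quotient algebra or a Kronecker-type trick, neither of which appears in your plan.

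The paper avoids all of this by a preliminary dimension reduction that your proposal is missing: after choosing the transcendence basis so that $\{X_1,\dots,X_t\}$ is a maximal independent subset of the target variables (your choice of $I$ with $S$ maximal is the same idea, via the mixed-volume criterion of Lemma \ref{mixedvolume}), it specializes the superfluous free variables $X_{t+r+1},\dots,X_n$ at a random point $b$. Proposition \ref{prop:f_b} (resting on results about specializing reduced regular sequences from \cite{DJOSS}) shows that this slicing preserves $I(\overline{\pi(V^*(\mathbf{f}))})$ while cutting the variety down to dimension exactly $t$, so that $V^*(\mathbf{f}_b)$ and its projection share the free variables $X_1,\dots,X_t$ and the map to the image is generically finite. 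The parametric resolution is then computed only in $t$ parameters and $r$ dependent variables (Algorithm \texttt{ParametricToricGeomRes}), and the resolution of the image is obtained by pure linear algebra modulo $q_\lambda$ (Algorithm \texttt{GeomResProj}): powers of $p_\mu$ modulo $q_\lambda$, a rank computation giving $\deg q_\mu$, and linear systems giving both $q_\mu$ and the parametrizations $v_j$, all within $O(L+\mathcal{D}^\Omega+\mathcal{D}^2(\ell-t))$ operations and with no extraneous factors. To repair your argument you would need either to incorporate this slicing step (after which your resultant idea becomes unnecessary) or to supply a genuinely different elimination scheme with proven degree and complexity control, plus a separate mechanism for the parametrizations; as written, the central step is asserted rather than proved.
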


The stated complexity is due to a polyhedral deformation based algorithm to solve generic sparse zero-dimensional systems (\cite{JMSW09}), an algorithmic Newton-Hensel lifting (\cite{Schost03}) and codification of multivariate polynomials as straight-line programs (\cite{BCS97}). As in \cite{JMSW09}, we do not take into account the cost of mixed volume related computations (see Section \ref{sec:algcod} for more details on this point). Furthermore, we have ignored terms depending on the size of certain combinatorial objects associated to polyhedral deformations involved; for a more precise complexity estimate, see Theorem \ref{thm:K-Proj}.

The paper is organized as follows. In Section \ref{sec:prelim}, the notation and the basic theoretical and algorithmic notions  used throughout are introduced. Section \ref{sec:theoretical} is devoted to proving the main theoretical results on which our algorithms rely. Finally, Section \ref{sec:algorithms} contains the descriptions, proof of correctness and complexity estimates of our algorithms and examples illustrating how they work.

\section{Preliminaries}\label{sec:prelim}

\subsection{Basic definitions and notation}

Let $k$ be a field of characteristic zero and $\overline{k}$ be an algebraic
closure of $k$. Given polynomials $f_1,\dots, f_r \in k[X_1,\dots,X_n]$, we write
$V(\mathbf{f})=V(f_1,\dots, f_r)$ for the affine variety which is the set of the common zeros of
$\mathbf{f}=(f_1,\dots, f_r)$ in $\overline k^n$, and
$$V^*(\mathbf{f})= V^*(f_1,\dots, f_r):= \overline{V(f_1,\dots, f_r) \cap (\overline k^*)^n}$$
for the union of the irreducible components of $V(f_1,\dots, f_r)$ containing points with all their coordinates in $\overline k^*:= \overline k - \{0\}$

For a variety $V\subset \overline k^n$ definable over $k$, we denote $k[V]=k[X_1,\dots, X_n]/I(V)$ its coordinate ring  (where $I(V)\subset k[X_1,\dots, X_n]$ is the ideal of the polynomials vanishing identically on $V$). If $V$ is irreducible, we write $k(V)$ for the fraction field of $k[V]$.

To describe zero-dimensional affine varieties we use the notion of a \emph{geometric resolution}
(see, for instance, \cite{GLS01} and the references therein):
Let $V= \{\xi^{(1)}, \dots, \xi^{(D)}\}\subset \overline k^n$ be a
zero-dimensional variety defined by polynomials in $k[X_1,\dots, X_n]$. Given a
linear form $\lambda = \lambda_1X_1 + \dots + \lambda_n X_n$ in $k[X_1,
\dots, X_n]$ such that $\lambda(\xi^{(i)})\ne \lambda(\xi^{(j)})$ if $i
\ne j$, the following polynomials completely characterize $V$:
\begin{itemize}
\item the minimal polynomial $q_\lambda = \prod_{1 \le i \le D}(Y - \lambda(\xi^{(i)})) \in k[Y]$ of $\lambda$ over
the variety $V$ (where $Y$ is a new variable),
\item polynomials $v_1, \dots, v_n \in k[Y]$ with $\deg(v_j) < D$ for every $1 \le j \le n$
satisfying $V = \{(v_1 (\eta), \dots, v_n (\eta)) \in \C^n \mid \eta
\in \overline k, q_\lambda(\eta)=0\}$.
\end{itemize}
The family of univariate polynomials $(q_\lambda, v_1,\dots, v_n) \in
k[Y]^{n+1}$ is called the geometric resolution of $V$ (or a geometric resolution of $k[V]$) associated
with the linear form $\lambda$.

The notion of geometric resolution can be extended to any equidimensional variety:
Let $V\subset \overline k^{n}$ be an equidimensional variety of dimension $t$
defined by polynomials in $k[X_1,\dots, X_n]$. Assume that, for each irreducible component $W$ of $V$, the identity $I(W) \cap k[X_1,\dots, X_t] = \{ 0 \}$ holds. By considering $k(X_1,\dots, X_t) \otimes k[V]$, we are in a zero-dimensional situation, and we call a \emph{geometric resolution of $V$ with free variables $X_1,\dots, X_t$} to a geometric resolution $(q_\lambda, v_{t+1},\dots, v_n) \in k(X_1,\dots, X_t)[Y]^{n-t+1}$ of $k(X_1,\dots, X_t) \otimes k[V]$ associated to a linear form $\lambda \in k[X_{t+1},\dots, X_n]$. If $\hat q_\lambda \in k[X_1,\dots, X_t,Y]$ is  obtained from $q_\lambda$ by clearing denominators, a geometric resolution of $V$ gives a birational map between the hypersurface $\{(x_1,\dots, x_t, y) \in \overline k^{n-t+1} \mid \hat q_\lambda(x_1,\dots, x_t, y) =0\}$ and $V$.

When dealing with varieties defined by sparse polynomial systems, an important combinatorial invariant associated to the system is the mixed volume of their supports. For a family $\cA=(\cA_1,\dots, \cA_n)$ of $n$ finite subsets of $(\Z_{\ge 0})^n$, $MV_n(\cA)$ (or $MV(\cA)$ if $n$ is clear from the context) denotes the $n$-dimensional mixed volume of the convex hulls in $\R^n$ of $\cA_1,\dots, \cA_n$ (see, for instance, \cite[Chapter 7]{CLO98} for the definition and basic properties). In this context, we write $\Delta$ for the vertex set $\{0, e_1,\dots, e_n\}$ of the standard simplex of $\R^n$, which is the support of an affine linear form with nonzero coefficients, and $\Delta^{(t)}$ for the family of $t$ copies of $\Delta$.

\subsection{Algorithms and codification} \label{sec:algcod}

Although we work with polynomials, our algorithms only deal with
elements in a base field $k$. The notion of \emph{complexity} of an algorithm we
consider is the number of operations and comparisons in $k$ it performs. We will encode multivariate polynomials in different
ways:
\begin{itemize}
\item in sparse form, that is, by means of the list of pairs $(a, c_a)$ where $a$ runs
over a fixed set including the exponents of the monomials appearing in the
polynomial with nonzero coefficients and $c_a$ is the corresponding
coefficient,
\item in the standard dense form, which encodes a polynomial of degree bounded by $d$
as the vector of the coefficients of all the monomials of degree at most $d$ including zeros (we use this
encoding only for univariate polynomials),
\item  in the \emph{straight-line program} (slp for short) encoding. A straight-line
program is an algorithm without branchings which allows the
evaluation of the polynomial at a generic value (for a precise
definition and properties of slp's, see \cite{BCS97}).
\end{itemize}

In our complexity estimates, we use the usual $O$ notation: for
$f,g\colon \Z_{\ge 0} \to \R$,  $f(d) = O(g(d))$ if $|f (d)| \le c|g(d)|$
for a positive constant $c$. We also use the notation $M(d) = d
\log^2(d) \log(\log(d))$, where $\log$ denotes logarithm to base
$2$. We recall that multipoint evaluation and interpolation of
univariate polynomials of degree $d$  with coefficients in a
commutative ring $R$ of characteristic $0$ can be performed with
$O(M(d))$ operations and that multiplication and division with
remainder of such polynomials can be done with $O(M(d)/ \log(d))$
arithmetic operations in $R$.
We write $\Omega$ for the exponent ($\Omega <4$) in the complexity
$O(d^{\Omega})$ of operations (determinant and adjoint computations) on $d\times d$ matrices
with entries in a commutative ring $R$ (\cite{Ber84}). We use Pad\'e approximation in order to compute the dense representation
of the numerator and denominator of a rational function $f = \frac{p}{q}\in k(Y)$ with
$\max\{\deg p, \deg q\} \le d$ from its Taylor series expansion up to order $2d$, which we do by means of subresultant computations within $O(d^{\Omega +1})$ arithmetic operations in $k$ (see \cite[Corollaries 5.21 and 6.48]{vzGG99}).

Our algorithms are probabilistic in the sense that they make random
choices of points (that we consider cost-free in our complexity estimates) which lead to a correct computation provided the
points lie outside  certain proper Zariski closed sets of  suitable
affine spaces. Using the Schwartz-Zippel lemma (\cite{Schwartz80, Zippel93}),
the error probability of our algorithms
can be controlled by making these random choices within sufficiently
large sets of integer numbers whose size depend on the degrees of the polynomials
defining the previously mentioned Zariski closed sets.

We use two previous algorithms as subroutines:
\begin{itemize}
\item a probabilistic algorithm that, given $n$ generic sparse polynomials in $n$ variables with coefficients in $k$, computes a geometric resolution of the set of their common zeros in $\overline k^n$ (see \cite[Section 5]{JMSW09}),
\item a Newton-Hensel based procedure that, given a system $\mathbf{f}$ of $n$ polynomials in $n$ variables and $t$ parameters, a specialization point $\xi\in \Q^t$ for the parameters and a geometric resolution of the set of simple common zeros of $\mathbf{f}(\xi, \cdot)$, computes an approximation up to a given precision of the geometric resolution of the components of $V(\mathbf{f})$ where the Jacobian determinant of the system with respect to its $n$ variables does not vanish identically (see \cite[Section 4.2]{Schost03}).
\end{itemize}

The algorithm in \cite{JMSW09} assumes that the mixed cells in a fine mixed subdivision of the family of the input supports are given (see \cite{HS95} for the definition of a fine mixed subdivision), that is to say, the computation of mixed cells is considered as a pre-processing. In this paper, our algorithms also require deciding whether a mixed volume is zero or not, and computing mixed volumes.

While the non-vanishing of a mixed volume can be decided algorithmically in polynomial time, the problem of computing mixed volumes is known to be \#P-hard (see \cite{DGH98}, \cite{EC95}).
 The mixed cells in a fine mixed subdivision of a family of finite sets of $\Z^n$ (and, therefore, the mixed volume) can be obtained algorithmically by means of \emph{lifting} (see \cite{HS95}) and linear programming based procedures. In \cite{EC95}, an algorithm following this approach is presented
with a worst-case complexity single exponential in $n$. Successive algorithms that run faster according to numerical results can be found in \cite{LL01} and \cite{GL03}.
A dynamic approach which does not use a random lifting function is given in \cite{VGC96}.
The dynamic enumeration procedure from \cite{MTK07}, which proved to be efficient even for large systems, seems to be the fastest known up until now.
However, there are no explicit complexity upper bounds for these more efficient procedures, neither depending on the input nor the output size (namely, the mixed volume).

As in \cite{JMSW09}, we consider the computation of mixed cells as a pre-processing and do not include its cost in our complexity estimates. Note that if our algorithm needs to be applied to several systems with the same supports, this pre-processing only has to be carried out once.

\section{Theoretical results}\label{sec:theoretical}

Let $n, r$ be positive integers and $\cA= (\cA_1,\dots, \cA_r)$ a family of $r$ finite subsets of $(\Z_{\ge 0})^n$.
Consider $r$ polynomials in $n$ variables $X=(X_1,\dots, X_n)$ supported on $\cA$ with indeterminate coefficients:
for every $1\le j \le r$, let
\begin{equation}\label{eq:genericpols}
f_j(C_j, X)= \sum_{a\in \cA_j} C_{j, a} X^a,
\end{equation}
where
$C_j:=(C_{j,a})_{a\in \cA_j}$ is a set of $N_j:= \# \cA_j$ new indeterminates. Let $\mathbf{f}:=(f_1,\dots, f_r)$ and $K:= \Q(C_1,\dots, C_r)$.

\subsection{Reduction to the toric case}

Consider the family $\mathbf{f}=(f_1,\dots, f_r)$ of polynomials in $K[X_1,\dots, X_n]$ supported on $\cA$ with indeterminate coefficients introduced in (\ref{eq:genericpols}). Since $\mathbf{f}$ is a generic system supported on $\cA$, the equidimensional decomposition of the affine variety $V(\mathbf{f})\subset \overline K^n$ depends only on the combinatorial structure of $\cA$ (see \cite{HJS}). Moreover, its equidimensional components can be defined from certain smaller polynomial families $\mathbf{f}_I$ in fewer variables associated to $\mathbf{f}$. This decomposition, which we explain below, enables us to reduce the problem of computing a projection of the affine variety $V(\mathbf{f})$ to the computation of finitely many projections of varieties having a non-empty intersection with the torus.

\bigskip

For $I\subset \{ 1\dots, n\}$, given a polynomial $f\in K[X_1,\dots, X_n]$, we write $f_I$ to denote the polynomial in $K[(X_i)_{i\notin I}]$ obtained from $f$ by specializing $X_i=0$ for every $i\in I$. We define
 $
 J_I = \{ j\in \{1,\dots, r\} \mid \exists a\in \cA_j : a_i =0 \ \forall\, i\in I \}$  and $
\mathbf{f}_I= ((f_j)_I)_{j\in J_I}$
(note that $J_I$ is the set of indices of those polynomials $f_j$ that do not vanish identically when specializing $X_i=0$ for every $i\in I$). Let $$\Gamma=\{ I \subset \{1,\dots, n\} \mid \forall\, J\subset J_I,\ \dim(\sum_{j\in J} \cA_j^I ) \ge \#J; \forall\, \widetilde I \subset I, \ \#J_{\widetilde I} + \# \widetilde I \ge \#J_{I} + \# I\},$$
where, for every $1\le j \le r$,  $\cA_j^I\subset (\Z_{\ge 0})^{n-\#I}$ denotes the support of $(f_j)_I$. Finally, let $\varphi_I: \overline K^{n-\#I} \to \overline K^n$ be the map that inserts zeros in the coordinates indexed by $I$.
Using the previous notation, according to \cite[Theorem 7]{HJS}, we have that
$$V(\mathbf{f}) = \bigcup_{I\in \Gamma} \varphi_I(V^*(\mathbf{f}_I)).$$

Let $\pi : \overline K^n \to \overline K^\ell$ be the projection $\pi(x_1,\dots, x_n)= (x_1,\dots, x_\ell)$. For  $I\subset \{1,\dots, n\}$, let
$\ell_I =\ell - \# (I \cap \{1,\dots, \ell\}),$
 $\pi_{\ell_I} : \overline K^{n-\# I}\to \overline K^{\ell_I}$ the projection to the first coordinates and $\varphi_{I\cap\{1,\dots,\ell\}}: \overline K^{\ell_I} \to \overline K^\ell$ the map that inserts zeros in the coordinates indexed by $I\cap \{1,\dots,\ell\}$. As
$\pi(\varphi_I(V^*(\mathbf{f}_I))) = \varphi_{I\cap\{1,\dots,\ell\}}(\pi_{\ell_I}(V^*(\mathbf{f}_I)))$
holds for every $I$, we conclude that
$$\pi(V(\mathbf{f})) = \bigcup_{I\in \Gamma} \varphi_{I\cap\{1,\dots,\ell\}}(\pi_{\ell_I}(V^*(\mathbf{f}_I))).$$

Therefore, the computation of $\overline{\pi(V(\mathbf{f}))}$ amounts to obtaining the Zariski closures of the projections by $\pi_{\ell_I}$ of the affine varieties $ V^*(\mathbf{f}_I)= \overline{\{ x\in (\overline K^*)^{n-\# I} \mid \mathbf{f}_I(x)=0 \}}$. For this reason, in the sequel we will deal with this problem.

\subsection{The vanishing ideal of the projection}

We consider a generic polynomial system $\mathbf{f}:=(f_1,\dots, f_r)$ supported on $\cA:=(\cA_1,\dots, \cA_r)$  with indeterminate coefficients $C_j:=(C_{j,a})_{a\in \cA_j}$ for every $1\le j\le r$ as in equation (\ref{eq:genericpols}). We assume that $r\le n$ and that, for every $J\subset \{1,\dots, r\}$, $\dim(\sum_{j\in J} \cA_j)\ge \# J$ (or, equivalently, $MV(\cA, \Delta^{(n-r)})>0$) so that $V^*(\mathbf{f})$ is not empty.

Our aim is to obtain a geometric resolution of the Zariski closure $\overline{\pi(V^*(\mathbf{f}))}$, where $\pi: \overline K^n\to \overline K^\ell$ ($\ell\le n$) is the projection defined by $\pi(x_1,\dots, x_n)= (x_1,\dots, x_\ell)$.
We begin by proving some basic results on the vanishing ideals of the varieties $V^*(\mathbf{f})$ and $\overline{\pi(V^*(\mathbf{f}))}$.

\begin{lemma} \label{lem:idealV*} Under the previous assumptions,
 $(f_1,\dots, f_r): (X_1\dots X_n)^{\infty}$ is a prime ideal in $K[X_1,\dots, X_n]$ of dimension $n-r$. Moreover,
$I(V^*(\mathbf{f}))= (f_1,\dots, f_r): (X_1\dots X_n)^\infty.$
\end{lemma}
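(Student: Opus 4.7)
The plan is to reduce to the Laurent polynomial ring $L := K[X_1^{\pm 1},\dots,X_n^{\pm 1}]$: the saturation $I' := (f_1,\dots,f_r):(X_1\cdots X_n)^\infty$ is by definition the contraction of $(f_1,\dots,f_r)L$ to $K[X]$. Contraction preserves primality, and since $K[X]/I'$ is a domain with the same fraction field as its localization $L/(f_1,\dots,f_r)L$ (obtained by inverting the $X_i$), it also preserves Krull dimension. Hence everything reduces to showing that $(f_1,\dots,f_r)L$ is a prime ideal of dimension $n-r$, and then to identifying $I'$ with $I(V^*(\mathbf{f}))$.

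The decisive observation is that each $f_j$ is linear in its own coefficients $(C_{j,a})_{a\in\cA_j}$, which form pairwise disjoint sets across $j$. Fixing some $a_j\in\cA_j$ for each $j$, the monomial $X^{a_j}$ is a unit in $L$, so $f_j=0$ rearranges to $C_{j,a_j} = -X^{-a_j}\sum_{a\ne a_j}C_{j,a}X^a$, yielding a $\Q$-algebra isomorphism
\[
\Phi:\;\Q[C][X^{\pm 1}]/(f_1,\dots,f_r)\;\xrightarrow{\;\cong\;}\;\Q[\tilde C][X^{\pm 1}],\qquad \tilde C := C\setminus\{C_{j,a_j}:1\le j\le r\},
\]
whose inverse is induced by $\Q[\tilde C]\hookrightarrow\Q[C]$. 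The target is a Laurent polynomial ring over $\Q$ in $N-r+n$ variables, hence a domain. Localizing at $\Q[C]\setminus\{0\}$ realizes $L/(f_1,\dots,f_r)L$ as a localization of the left-hand side of $\Phi$; once this localization is shown to be nonzero it is automatically a domain, and its Krull dimension equals $\mathrm{trdeg}_K\Q(\tilde C,X) = (N-r+n)-N = n-r$. Nonzeroness amounts to the injectivity of the induced map $\Q[C]\to\Q[\tilde C][X^{\pm 1}]$, equivalently to dominance of the projection $\mathrm{Spec}(\Q[C][X^{\pm 1}]/(f_1,\dots,f_r))\to\A^N$. This is where the combinatorial hypothesis is used: $\dim(\sum_{j\in J}\cA_j)\ge\#J$ for all $J$ is equivalent to $MV(\cA,\Delta^{(n-r)})>0$, and by Bernstein--Khovanskii--Kushnirenko applied to $\mathbf{f}_c$ augmented with $n-r$ generic affine linear polynomials some specialization $c$ yields $V^*(\mathbf{f}_c)\neq\emptyset$, giving the required dominance. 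I expect this dominance check to be the main obstacle, since it is where an external sparse-systems input is invoked; everything else is routine once $\Phi$ is written down.

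For the identification $I(V^*(\mathbf{f})) = I'$: the Nullstellensatz applied in $\overline K[X]$ gives $I_{\overline K}(V^*(\mathbf{f})) = \sqrt{I'\overline K[X]}$, since $V(I')=V^*(\mathbf{f})$ by the very definition of the saturation. Working in characteristic zero, any reduced $K$-algebra stays reduced after base change to $\overline K$, so $\overline K\otimes_K(K[X]/I')$ is reduced and $I'\overline K[X]$ is already radical (note that it need not be prime, as the quotient $K[X]/I'$ may fail to be geometrically integral over $K$). Thus $I_{\overline K}(V^*(\mathbf{f})) = I'\overline K[X]$, and intersecting with $K[X]$ via the faithful flatness of $\overline K[X]$ over $K[X]$ yields $I(V^*(\mathbf{f})) = I'\overline K[X]\cap K[X] = I'$, as required.
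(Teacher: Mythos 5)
Your argument is correct, and its core coincides with the paper's: your isomorphism $\Phi$ is exactly the paper's morphism $\psi\colon\Q[C,X]\to\Q[C',X]_{X_1\cdots X_n}$ (note $\Q[C',X]_{X_1\cdots X_n}=\Q[\tilde C][X^{\pm1}]$), so in both proofs primality of the saturation comes from eliminating one coefficient $C_{j,a_{j0}}$ per equation and landing in a Laurent polynomial domain; you just obtain the kernel identification for free in the Laurent ring, where the paper instead proves $\ker\psi=(f_1,\dots,f_r):(X_1\cdots X_n)^\infty$ by a Taylor-expansion argument in $\Q[C,X]$. Where you genuinely diverge, to your advantage, is in making explicit the step the paper compresses into ``follows by localizing at $\Q[C]-\{0\}$'': you isolate the needed injectivity of $\Q[C]\to\Q[\tilde C][X^{\pm1}]$ (equivalently, that the prime survives the localization and has the right transcendence degree) and justify it through $MV(\cA,\Delta^{(n-r)})>0$ and Bernstein's theorem, which is indeed the only place the combinatorial hypothesis enters; without it the statement fails. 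One wording to tighten there: a single specialization $c$ with $V^*(\mathbf{f}_c)\neq\emptyset$ does not give dominance; you need (and BKK does give, since equality with the mixed volume holds for all coefficient vectors outside a proper closed set) a Zariski-dense set of such $c$. For the identity $I(V^*(\mathbf{f}))=(f_1,\dots,f_r):(X_1\cdots X_n)^\infty$ your route also differs from the paper's: you use that $V(I'\overline K[X])=V^*(\mathbf{f})$, radicality of $I'\overline K[X]$ via geometric reducedness in characteristic zero, and faithfully flat descent $I'\overline K[X]\cap K[X]=I'$, whereas the paper argues elementwise with the Nullstellensatz and the primality of the saturation ($g\in I(V^*)$ implies $(X_1\cdots X_n)^mg^m\in(f_1,\dots,f_r)$, hence $g$ in the prime saturation). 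Both are sound; your version trades the paper's short ad hoc inclusion chase for standard base-change facts and, as a bonus, makes transparent exactly which hypotheses (characteristic zero, primality over $K$) are being used.
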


\begin{proof} For every $1\le j \le r$, fix $a_{j0}\in \cA_j$ and let $\cA_j':= \cA_j-\{a_{j0}\}$ and $C'_j:=C_j -\{C_{j, a_{j0}}\}$. We denote $C:=(C_1,\dots, C_r)$ and $C':=(C_1',\dots, C_r')$. Consider the ring morphism $\psi: \Q[C,X] \to \Q[C',X]_{X_1\dots X_n}$ (where $\Q[C',X]_{X_1\dots X_n}$ is the localization of $\Q[C',X]$ at the multiplicative set $\{(X_1\dots X_n)^m \mid m\in \Z_{\ge 0}\}$) defined by
$\psi(X_i) =X_i$ for $1\le i \le n$, $\psi(C_{j,a})=C_{j,a}$ for $1\le j \le r$, $a\in \cA'_j$, and $\psi(C_{j,a_{j0}}) = -X^{-a_{j0}}(\sum\limits_{a\in \cA_j'} C_{j,a}X^{a})$ for $1\le j \le r$.

We claim that $\ker(\psi) = (f_1,\dots, f_r):(X_1\dots X_n)^\infty$. It is clear that for a polynomial $g\in(f_1,\dots, f_r):(X_1\dots X_n)^\infty$, $\psi(g)=0$. Assume now that for a polynomial $g\in \Q[C,X]$ we have that $\psi(g) = 0$. Let $C_0:=(C_{1,a_{10}},\dots, C_{r,a_{r0}})$ and let $\widehat C_0:=(\widehat C_{10},\dots, \widehat C_{r0})$ be new variables. By Taylor expansion,
$g(\widehat C_0,C',X) = g(C_0, C',X) + \sum_{1\le j \le r} (\widehat C_{j0}-C_{j,a_{j0}}) \cdot G_j $
for certain polynomials $G_j\in \Q[\widehat C_0, C, X]$.
Specializing $\widehat C_{j0}= -X^{-a_{j0}}(\sum\limits_{a\in \cA_j'} C_{j,a}X^{a})$ for every $1\le j \le r$, it follows that
$\psi(g) = g(C_0,C', X)- \sum_{1\le j \le r}X^{-a_{j0}} f_j \cdot \widetilde G_j$ with $\widetilde G_j\in \Q[C, X]_{X_1\dots X_n}$;
therefore, $g(C,X) = \sum_{1\le j \le r}X^{-a_{j0}} f_j \cdot \widetilde G_j$. Multiplying by $(X_1\dots X_n)^{m}$ for a sufficiently large $m$, we conclude that $(X_1\dots X_n)^m g(C,X)\in (f_1,\dots, f_r)$.

Then, $(f_1,\dots, f_r): (X_1\dots X_n)^{\infty}$ is a prime ideal of $\Q[C,X]$. The first statement of the lemma follows by localizing at $\Q[C]-\{0\}$.

In order to prove the second part of the lemma,
consider first a polynomial $g\in (f_1,\dots, f_r): (X_1\dots X_n)^\infty$, and let $m\in \Z_{\ge 0}$ be such that $(X_1\dots X_n)^m g \in (f_1,\dots, f_r)$. Then, $(X_1\dots X_n)^m g$ vanishes over
$V(\mathbf{f})$ and so, $g$ vanishes over $V(\mathbf{f})\cap (\overline K^*)^n$; therefore, $g\in I(V^*(\mathbf{f}))$.
Conversely, if $g\in I(V^*(\mathbf{f}))$, then it vanishes over each irreducible component of $V(\mathbf{f})$ intersecting $(\overline K^*)^n$ properly.
Then, $(X_1\dots X_n) g $ vanishes over $V(f_1,\dots, f_r)$, which implies that there exists $m\in \Z_{\ge 0}$ with $(X_1\dots X_n)^m g^m \in (f_1,\dots, f_r)$. Therefore,
$g^m\in (f_1,\dots, f_r): (X_1\dots X_n)^\infty$ and, since this is a prime ideal, it follows that $g\in(f_1,\dots, f_r): (X_1\dots X_n)^\infty$.
\end{proof}

\begin{corollary}
The affine variety $V^*(\mathbf{f})\subset \overline K^n$ is an irreducible $K$-variety of dimension $n-r$.
\end{corollary}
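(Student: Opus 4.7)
The plan is to observe that this corollary is an immediate consequence of Lemma~\ref{lem:idealV*}, combining its two conclusions. Concretely, the lemma tells us (i) that the ideal $\mathfrak{a}:=(f_1,\dots,f_r):(X_1\dots X_n)^{\infty}$ is prime in $K[X_1,\dots,X_n]$ and has dimension $n-r$, and (ii) that $I(V^*(\mathbf{f}))=\mathfrak{a}$.

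First I would recall the standard correspondence: a $K$-variety $V\subset \overline K^n$ is irreducible if and only if its vanishing ideal $I(V)\subset K[X_1,\dots,X_n]$ is prime, and in that case $\dim V=\dim K[X_1,\dots,X_n]/I(V)$. Combining this with (i) and (ii), irreducibility of $V^*(\mathbf{f})$ follows from primality of $\mathfrak{a}$, and $\dim V^*(\mathbf{f})=\dim K[X_1,\dots,X_n]/\mathfrak{a}=n-r$.

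There is essentially no obstacle here, so I would present the argument in a single short paragraph. The only point worth double-checking is that the dimension statement in the lemma refers to the Krull dimension of the quotient ring (equivalently, the dimension of the associated affine variety over $\overline K$), so that the equality $\dim V^*(\mathbf{f})=n-r$ is indeed the dimension as a $K$-variety; this is consistent with the wording of Lemma~\ref{lem:idealV*} and with the fact that $V^*(\mathbf{f})$ is defined over $K$ by the generators of $\mathfrak{a}$.
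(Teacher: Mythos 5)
Your proposal is correct and matches the paper's intent exactly: the corollary is stated as an immediate consequence of Lemma~\ref{lem:idealV*}, with irreducibility coming from primality of $(f_1,\dots,f_r):(X_1\dots X_n)^\infty = I(V^*(\mathbf{f}))$ and the dimension from the lemma's dimension count. Nothing further is needed.
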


Taking into account that for any $K$-variety $V\subset \overline K^n$, the identity
$I(\overline{\pi(V)}) = I(V) \cap K[X_1,\dots, X_\ell]$ holds,
Lemma \ref{lem:idealV*} also enables us to characterize the vanishing ideal of the projection we want to compute:

\begin{corollary}\label{coro:idealpi} With the previous assumptions and notation,
$$I(\overline{\pi(V^*(\mathbf{f}))}) = \left((f_1,\dots, f_r): (X_1\dots X_n)^\infty\right) \cap K[X_1,\dots, X_\ell].$$
\end{corollary}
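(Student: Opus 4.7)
The corollary is a direct consequence of Lemma \ref{lem:idealV*} combined with the standard characterization of the vanishing ideal of a projection via elimination, so my plan is to simply compose these two facts carefully.

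The plan is to first record the general fact: for any $K$-variety $V\subset \overline{K}^n$, the ideal $I(\overline{\pi(V)})\subset K[X_1,\dots,X_\ell]$ of its Zariski closure under the projection to the first $\ell$ coordinates coincides with $I(V)\cap K[X_1,\dots,X_\ell]$. This is already stated in the paragraph preceding the corollary, and I would justify it briefly by observing that a polynomial $g\in K[X_1,\dots,X_\ell]$ vanishes on $\pi(V)$ iff it vanishes on $V$ (when viewed inside $K[X_1,\dots,X_n]$), and that vanishing on $\pi(V)$ is the same as vanishing on its Zariski closure because $g$ is polynomial.

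Next, I would apply this general fact with $V=V^*(\mathbf{f})$. Since Lemma \ref{lem:idealV*} identifies
\[
I(V^*(\mathbf{f})) = (f_1,\dots,f_r):(X_1\cdots X_n)^\infty,
\]
substituting this expression for $I(V)$ in the elimination identity yields exactly the claimed formula
\[
I(\overline{\pi(V^*(\mathbf{f}))}) = \bigl((f_1,\dots,f_r):(X_1\cdots X_n)^\infty\bigr)\cap K[X_1,\dots,X_\ell].
\]

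There is essentially no obstacle here: the hypotheses of Lemma \ref{lem:idealV*} (genericity of coefficients and $\dim(\sum_{j\in J}\cA_j)\ge\#J$ for all $J$) are already in force in the current subsection, so the lemma applies directly, and the elimination identity is standard. The corollary is really just a bookkeeping step that rephrases the vanishing ideal of the projection in a form amenable to the algorithmic treatment developed in later sections (namely, as a saturation intersected with a polynomial subring), so no substantive new argument is required beyond citing Lemma \ref{lem:idealV*}.
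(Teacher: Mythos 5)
Your proposal is correct and follows exactly the paper's (implicit) argument: the corollary is obtained by combining the elimination identity $I(\overline{\pi(V)}) = I(V)\cap K[X_1,\dots,X_\ell]$, stated just before the corollary, with the identification $I(V^*(\mathbf{f})) = (f_1,\dots,f_r):(X_1\cdots X_n)^\infty$ from Lemma \ref{lem:idealV*}. Nothing further is needed.
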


\bigskip

Let $t:= \dim(\overline{\pi(V^*(\mathbf{f}))})$.
Without loss of generality, by renaming variables, we may assume that $\{X_1,\dots, X_t\}\subset \{X_1,\dots, X_\ell\}$ is a transcendence basis of $K(\overline{\pi(V^*(\mathbf{f}))})$ over $K$ and $\{ X_{t+r+1},\dots, X_n\}$ are such that $\{X_1,\dots, X_t, X_{t+r+1}, \dots, X_n\}$ is a transcendence basis of $K(V^*(\mathbf{f}))$ over $K$. The following proposition generically allows us to deal with projections with $0$-dimensional generic fibers.

\begin{proposition} \label{prop:f_b} There is a Zariski dense open set $\mathcal{O}\subseteq \overline K^{n-t-r}$ such that, for every $b\in K^{n-t-r} \cap \mathcal{O}$, the identity
$I(\overline{\pi(V^*(\mathbf{f}))}) = ((f_1(X_1,\dots,X_{t+r}, b), \dots, f_r(X_1,\dots, X_{t+r},b)): (X_1\dots X_{t+r})^\infty)\cap K[X_1,\dots, X_\ell]$  holds.
\end{proposition}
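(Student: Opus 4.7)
The plan is to first establish the elementary inequality $\ell \le t+r$ and then prove the two inclusions separately, with the forward one being algebraic and the reverse one geometric. The inequality $\ell \le t+r$ follows from a transcendence-degree comparison: any $X_i$ with $t+r < i \le \ell$ would lie simultaneously in the projection coordinates (hence would be algebraic over $K(X_1,\dots,X_t)$ inside $K(\overline{\pi(V^*(\mathbf{f}))})\subset K(V^*(\mathbf{f}))$) and in the chosen transcendence basis $\{X_1,\dots,X_t,X_{t+r+1},\dots,X_n\}$ of $K(V^*(\mathbf{f}))/K$, which is impossible. In particular, every $g\in K[X_1,\dots,X_\ell]$ is fixed by the specialization $X_{t+r+i}\mapsto b_i$.

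For the inclusion $I(\overline{\pi(V^*(\mathbf{f}))}) \subset J_b\cap K[X_1,\dots,X_\ell]$ (writing $J_b$ for the ideal on the right-hand side of the displayed identity), take $g$ in the left-hand side. By Corollary~\ref{coro:idealpi}, there exist $m\in\Z_{\ge 0}$ and $h_1,\dots,h_r\in K[X_1,\dots,X_n]$ with $(X_1\cdots X_n)^m\,g=\sum_j h_j f_j$. Specializing $X_{t+r+i}\mapsto b_i$ yields
\[
(X_1\cdots X_{t+r})^m(b_1\cdots b_{n-t-r})^m\,g \;=\; \sum_j h_j(X_1,\dots,X_{t+r},b)\,f_j(X_1,\dots,X_{t+r},b).
\]
Thus, provided $b$ lies in the Zariski open set $\{b_1\cdots b_{n-t-r}\ne 0\}$, the scalar $(b_1\cdots b_{n-t-r})^m\in K\setminus\{0\}$ can be divided out and $g\in J_b$.

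The reverse inclusion is the geometric heart of the argument. Identify $V(J_b)\subset\overline{K}^{t+r}$ with the Zariski closure of the toric fiber $V^*(\mathbf{f})\cap(\overline{K}^*)^n\cap\{X_{t+r+1}=b_1,\dots,X_n=b_{n-t-r}\}$, after dropping the last $n-t-r$ coordinates; this identification uses that $b$ is in the torus together with Lemma~\ref{lem:idealV*}. Because $\{X_1,\dots,X_t,X_{t+r+1},\dots,X_n\}$ is a transcendence basis of $K(V^*(\mathbf{f}))/K$, the morphism $V^*(\mathbf{f})\cap(\overline{K}^*)^n\to \overline{K}^t\times\overline{K}^{n-t-r}$ given by the coordinates at positions $1,\dots,t,t+r+1,\dots,n$ is dominant with generically finite fibers. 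Standard generic-fiber arguments then produce a Zariski dense open set $\mathcal{O}\subset\overline{K}^{n-t-r}$, contained in the torus, such that for every $b\in\mathcal{O}$ the toric fiber is nonempty and its projection onto the first $t$ coordinates is dominant onto $\overline{K}^t$. Hence the image of $V(J_b)$ under $(x_1,\dots,x_{t+r})\mapsto(x_1,\dots,x_\ell)$ has Zariski closure of dimension at least $t$ and is contained in $\overline{\pi(V^*(\mathbf{f}))}$; since the latter is irreducible of dimension exactly $t$, the two varieties coincide. Any $g\in J_b\cap K[X_1,\dots,X_\ell]$ vanishes on $V(J_b)$, therefore on this dense image, and so $g\in I(\overline{\pi(V^*(\mathbf{f}))})$.

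The main obstacle will be choosing the Zariski open set $\mathcal{O}$ precisely: it has to simultaneously force $b$ into the torus, guarantee that the toric fiber over $b$ has the correct dimension $t$, and ensure dominance of its projection onto $\overline{K}^t$. These conditions all follow from generic-fiber considerations applied to the dominant morphism above, but they must be combined carefully into a single open condition valid throughout the argument.
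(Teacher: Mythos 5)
Your argument is correct, but for the hard (reverse) inclusion it takes a genuinely different route from the paper's. You argue geometrically: you identify the zero set of the specialized saturated ideal with the closure of the toric fiber of $V^*(\mathbf{f})$ over $b$, use that the coordinates $X_1,\dots,X_t,X_{t+r+1},\dots,X_n$ give a dominant map from $V^*(\mathbf{f})\cap(\overline K^*)^n$ to $\overline K^{t}\times\overline K^{n-t-r}$, and invoke Chevalley-type genericity (constructible image plus slicing the complement of a dense open set) to get, for $b$ in a dense open set, a nonempty fiber whose projection to the first $t$ coordinates is dominant; a dimension count inside the irreducible $t$-dimensional variety $\overline{\pi(V^*(\mathbf{f}))}$ then forces every element of the contracted ideal to vanish on it. The paper instead works algebraically: it rewrites the saturation via the ideal $(YX_1\cdots X_n-1,f_1,\dots,f_r)$, observes this is a reduced regular sequence with $\{X_1,\dots,X_t,X_{t+r+1},\dots,X_n\}$ algebraically independent, and cites \cite{DJOSS} (Corollary 17 and Theorem 19) to conclude that for generic $b$ the specialized saturated ideal is radical, equidimensional of dimension $t$, with $\{X_1,\dots,X_t\}$ independent modulo each associated prime; equality then follows since this ideal contains the $t$-dimensional prime $I(\overline{\pi(V^*(\mathbf{f}))})$. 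Your route is more elementary and self-contained: it needs no radicality of the specialized ideal and no external specialization theorem, only standard dimension theory (though the ``standard generic-fiber arguments'' defining $\mathcal{O}$ are exactly what must be written out carefully). What the paper's route buys is the stronger structural conclusion about $(\mathbf{f}_b):(X_1\cdots X_{t+r})^\infty$ itself --- radical, equidimensional of dimension $t$, with $X_1,\dots,X_t$ free modulo every associated prime --- which is precisely the hypothesis under which \texttt{ParametricToricGeomRes} is later applied to $\mathbf{f}_b$ in Algorithm \texttt{$K$-Projection}, so the paper's proof does extra work the algorithmic sections rely on. (Minor slip: when you write $V(J_b)\subset\overline K^{t+r}$ you mean the zero set of the saturated ideal in $K[X_1,\dots,X_{t+r}]$ before contracting to $K[X_1,\dots,X_\ell]$; this does not affect the argument. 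Your preliminary observation $\ell\le t+r$ and the easy inclusion agree with the paper's terser treatment.)
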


\begin{proof} Let $\widehat X:=(X_1,\dots, X_{t+r})$. From Corollary \ref{coro:idealpi},
it is clear that $I(\overline{\pi(V^*(\mathbf{f})}))  \subset
((f_1(\widehat X, b), \dots, f_r(\widehat X,b)): (X_1\dots
X_{t+r})^\infty)\cap K[X_1,\dots, X_\ell]$ for every
$b=(b_{t+r+1},\dots, b_n)$ such that $b_i \ne 0$ for every
$t+r+1\le i \le n$.

For the converse inclusion, first note that, for every $1\le j \le r $,
$$ K[X_1,\dots, X_n]/(f_1,\dots, f_j): (X_1\dots X_n)^\infty  \simeq K[X_1,\dots, X_n]_{(X_1\dots X_n)} /(f_1,\dots, f_j)$$ $$\simeq K[Y,X_1,\dots, X_n] /(YX_1\dots X_n-1,f_1,\dots, f_j).$$
As in the first part of the proof of Lemma \ref{lem:idealV*}, we have that $(f_1,\dots, f_j): (X_1\dots X_n)^\infty $ is a prime ideal of dimension $n-j$ for every $1\le j\le r$; therefore, $YX_1\dots X_n-1$, $f_1,\dots, f_r$ is a reduced regular sequence in $K[Y,X_1,\dots, X_n]$. Moreover, the set $\{X_1,\dots, X_t, X_{t+r+1},\dots, X_n\}$ is algebraically independent modulo $(YX_1\dots X_n-1,f_1,\dots, f_r)$.
Then, by \cite[Corollary 17 and Theorem 19]{DJOSS}, there exists a $K$-definable Zariski dense open set $\mathcal{O}\subset \overline K^{n-t-r}$ containing $\{x_i\ne 0\ \forall \, 1\le i \le t+r\}$ such that for every $b\in \mathcal{O}\cap K^{n-t-r}$,
$c_b\, Y X_1\dots X_{t+r}-1,f_1(\widehat X, b) ,\dots, f_r(\widehat X, b)$
(where $c_b:=b_{t+r+1}\dots b_{n}$) is a reduced regular sequence in $K[\widehat X]$ and  $\{X_1,\dots, X_t\}$ is algebraically independent modulo each of the associated primes of the ideal this regular sequence generates.
By noticing that
$$ K[\widehat X]/(f_1(\widehat X, b) ,\dots, f_r(\widehat X, b)): (X_1\dots X_{t+r})^\infty  \simeq $$ $$\simeq K[\widehat X] /(c_b \, YX_1\dots X_{t+r}-1,f_1(\widehat X,b),\dots, f_{r}(\widehat X, b)),$$
we conclude that $(f_1(\widehat X, b) ,\dots, f_r(\widehat X, b)): (X_1\dots X_{t+r})^\infty $ is a
radical equidimensional ideal of dimension $t$ and $\{X_1,\dots, X_t\}$ is algebraically
independent modulo each of its associated primes. Then, the same holds for the ideal
$((f_1(\widehat X, b) ,\dots, f_r(\widehat X, b)): (X_1\dots X_{t+r})^\infty )\cap K[X_1,\dots,
X_\ell]$. As this ideal includes the $t$-dimensional prime ideal $I(\overline{\pi(V^*(\mathbf{f}))})$, the
equality in the statement of the proposition holds.
\end{proof}

\subsection{Free variables}

The following result gives a combinatorial condition for the algebraic independence of a subset of variables modulo the vanishing ideal of $V^*(\mathbf{f})$ that  we will use in our algorithm to compute a suitable transcendence basis of the rational fraction field of this variety.

\begin{lemma} \label{mixedvolume} Let $f_1,\dots, f_r\in K[X_1,\dots, X_n]$ be sparse polynomials supported on a family  $\cA= (\cA_1, \dots, \cA_r)$ of finite subsets of $(\Z_{\ge 0})^n$  as introduced in (\ref{eq:genericpols}), and let $\mathcal{I}= (f_1,\dots, f_r): (X_1\dots X_n)^\infty\subset K[X_1,\dots, X_n]$. Let $\{e_1,\dots, e_n\}$ be the canonical basis of $\Q^n$.

Then, the set $\{ X_{i_1},\dots, X_{i_k}\}\subset \{X_1,\dots, X_n\}$ for $1\le i_1<\dots < i_k\le n$ is algebraically independent modulo $\mathcal{I}$ if and only if $MV(\cA_1,\dots, \cA_r, \{0,e_{i_1}\},\dots, \{0,e_{i_k}\}, \Delta^{(n-r-k)})>0$.
\end{lemma}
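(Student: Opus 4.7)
The plan is to translate algebraic independence of $\{X_{i_1},\dots,X_{i_k}\}$ modulo $\mathcal{I}$ into a non-emptiness statement about a certain toric intersection, and then invoke Bernstein's theorem to match this non-emptiness with the positivity of the mixed volume.

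First, I would use Lemma~\ref{lem:idealV*}: $\mathcal{I}$ is prime of dimension $n-r$, and $\mathcal{I}=I(V^*(\mathbf{f}))$, so $V^*(\mathbf{f})$ is an irreducible $K$-variety of dimension $n-r$. Under this identification, algebraic independence of $\{X_{i_1},\dots,X_{i_k}\}$ modulo $\mathcal{I}$ is equivalent to the dominance of the projection
$$\rho : V^*(\mathbf{f}) \longrightarrow \overline K^k, \qquad (x_1,\dots,x_n)\longmapsto (x_{i_1},\dots,x_{i_k}).$$
Because $V^*(\mathbf{f})$ is irreducible of dimension $n-r$, dominance of $\rho$ forces $k\le n-r$ (so the expression $\Delta^{(n-r-k)}$ makes sense) and is equivalent to the condition that for generic $b=(b_1,\dots,b_k)\in (K^*)^k$ the fiber $V^*(\mathbf{f})\cap V(X_{i_1}-b_1,\dots,X_{i_k}-b_k)$ is non-empty of dimension $n-r-k$.

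Next, I would cut this fiber down to dimension zero by intersecting with $n-r-k$ generic affine linear forms $L_1,\dots,L_{n-r-k}$ with support $\Delta$. Since $V^*(\mathbf{f})\cap(\overline K^*)^n$ is Zariski dense in $V^*(\mathbf{f})$, for generic $b\in (K^*)^k$ the non-empty fiber meets the torus, and by a standard Bertini/dimension argument on the irreducible variety $V^*(\mathbf{f})\cap(\overline K^*)^n$, intersecting with $n-r-k$ generic hyperplanes preserves non-emptiness precisely when the fiber has the expected dimension. Thus the chain of equivalences becomes: $\{X_{i_1},\dots,X_{i_k}\}$ is algebraically independent modulo $\mathcal{I}$ iff the augmented system
$$f_1=\cdots=f_r=0,\quad X_{i_1}-b_1=\cdots=X_{i_k}-b_k=0,\quad L_1=\cdots=L_{n-r-k}=0$$
has a solution in $(\overline K^*)^n$ for generic choices of $b$ and of the coefficients of the $L_j$.

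Finally, I would apply the Bernstein--Kushnirenko--Khovanskii theorem to this augmented system. Its polynomials are supported on $\cA_1,\dots,\cA_r,\{0,e_{i_1}\},\dots,\{0,e_{i_k}\},\Delta,\dots,\Delta$, and, because the original coefficients $C_{j,a}$ are indeterminates and $b,L$ are generic, the whole system has generic coefficients in the relevant field. Hence the number of its isolated zeros in $(\overline K^*)^n$ equals $MV(\cA_1,\dots,\cA_r,\{0,e_{i_1}\},\dots,\{0,e_{i_k}\},\Delta^{(n-r-k)})$, and non-emptiness of the toric zero set is equivalent to positivity of this mixed volume, which closes both implications.

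The main delicate point I expect is justifying that the generic fiber of $\rho$ meets the torus and that intersecting with the generic linear forms $L_1,\dots,L_{n-r-k}$ neither destroys non-emptiness when $\rho$ is dominant nor spuriously creates it when $\rho$ is not dominant. Both directions reduce to the irreducibility of $V^*(\mathbf{f})\cap(\overline K^*)^n$ together with the standard fact that generic linear sections of an irreducible affine variety of dimension $d$ by $d$ generic hyperplanes yield a non-empty zero-dimensional set; once this is in place, the mixed volume identity is immediate from Bernstein.
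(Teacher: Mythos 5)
Your proposal is correct, and one half of it is exactly the paper's argument: for ``independence $\Rightarrow MV>0$'' the paper also completes $\{X_{i_1},\dots,X_{i_k}\}$ to a transcendence basis (using generic linear forms), cuts $V^*(\mathbf{f})$ by generic values to get a finite nonempty set of torus points, and invokes Bernstein's bound on isolated toric solutions; your dominance-plus-generic-hyperplane-sections phrasing is the same argument. Where you genuinely diverge is the converse. The paper proves ``$MV>0 \Rightarrow$ independence'' algebraically: it takes $p\in\mathcal{I}\cap K[X_{i_1},\dots,X_{i_k}]$, clears denominators to get an identity $p_0(C)(X_1\cdots X_n)^m p=\sum_j g_j f_j$ over $\Q[C,X]$, specializes the indeterminate coefficients at rational points $(c,\eta)$ with $p_0(c)Q(c,\eta)\neq 0$, and evaluates at a torus solution of the specialized augmented system to force $p\equiv 0$. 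You instead stay over $K$ and argue geometrically: $MV>0$ plus Bernstein's equality for generic coefficients gives torus solutions of the augmented system for generic $b,L$, hence generic fibers of the coordinate projection of $V^*(\mathbf{f})$ are nonempty, hence dominance. This is shorter and avoids handling the membership identity, but it shifts the burden onto the genericity claim you state in one line: you must justify that Bernstein's equality applies to a system whose coefficients are partly the indeterminates $C$ and partly elements of $K=\Q(C)$ chosen afterwards (the bad locus is a proper closed subset defined over $\Q$, and its fiber over the generic point $C$ is still proper, so generic $b,L$ work), and that normalizing the coefficient of $X_{i_j}$ to $1$ in $X_{i_j}-b_j$ is harmless because the toric zero count is invariant under scaling each equation. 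The paper's specialization route makes exactly this bookkeeping explicit via the polynomials $p_0$ and $Q$ (and it is the same mechanism that later underlies the $\Q$-coefficient version of the algorithm), so if you keep your route you should spell out those two routine points; with them included your proof is complete.
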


\begin{proof} First, assume that $MV(\cA_1,\dots, \cA_r, \{0,e_{i_1}\},\dots, \{0,e_{i_k}\},\Delta^{(n-r-k)})>0$. Let $Q$ be a non-zero polynomial in the coefficients of a system supported on $(\cA,$ $ \{0,e_{i_1}\},\dots, \{0,e_{i_k}\},$ $\Delta^{(n-r-k)})$ such that for each coefficient vector $ (c,\eta)= (c_1,\dots, c_r, \eta_{r+1},\dots, \eta_n)$ with coordinates in $\Q^*$
and $Q(c,\eta) \ne 0$, the corresponding sparse system has as many isolated solutions in $(\C^*)^n$ as the mixed volume.

Let $p\in \mathcal{I} \cap K[X_{i_1},\dots, X_{i_k}]$. Without loss of generality, we may assume that $p\in \Q[C,X_{i_1},\dots, X_{i_k}]$. We have that there exist $m\in \Z_{\ge 0}$ and
a non-zero polynomial $p_0\in \Q[C]$ such that
\begin{equation}\label{eq:identity}
p_0(C) (X_1\dots X_n)^m p(C, X_{i_1},\dots, X_{i_k}) = g_1 f_1+\cdots+ g_r f_r
\end{equation}
with $g_1,\dots, g_r\in \Q[C,X_1,\dots, X_n]$.

For each $(c,\eta)$ with coordinates in $\Q^*$ such that $p_0(c) Q(c,\eta) \ne 0$, considering a solution $\xi\in (\C^*)^n$ of the system
$$f_1(c_{1}, X) = 0,\dots, f_r(c_{r}, X)= 0, \eta_{r+1,1} + \eta_{r+1,2} X_{i_1} =0,\dots,
\eta_{r+k,1} + \eta_{r+k,2} X_{i_k} =0,$$ $$ \eta_{r+k+1,0}+\sum_{1\le i \le n} \eta_{r+k+1,i}X_i=0,\dots, \eta_{n,0}+\sum_{1\le i \le n} \eta_{n,i}X_i=0$$ and specializing identity (\ref{eq:identity}) in $(c, \eta, \xi)$
we obtain that
$p(c, -\eta_{r+1,1}/\eta_{r+1,2}, \dots, -\eta_{r+k,1}/\eta_{r+k,2}) =0.$
We conclude that $p\equiv 0$.

Assume now that $\{X_{i_1},\dots, X_{i_k}\}$ is algebraically independent modulo $\mathcal{I}= I(V^*(\mathbf{f}))$. Let
$l_1,\dots, l_{n-r-k}$ be linear forms in the variables $X_1,\dots, X_n$ with coefficients in $K^*$ such that $\{X_{i_1},\dots, X_{i_k}, l_1,\dots, l_{n-r-k}\}$ is a
transcendence basis of $K(V^*(\mathbf{f}))$. Since $V^*(\mathbf{f})\cap (\overline K^*)^n$ is a dense open subset of $V^*(\mathbf{f})$, for a
generic $(\zeta_{1},\dots, \zeta_{n-r})\in (\overline K^*)^{n-r}$, we have that
$$V^*(\mathbf{f}) \cap \{ x_{i_j}= \zeta_{j}\ \forall\, 1\le j \le k\} \cap  \{l_j(x)= \zeta_{k+j} \ \forall \, 1\le j \le n-r-k\}$$ is not empty and consists of finitely many points in $(\overline K^*)^n$. These points are the common solutions in $(\overline K^*)^n$ of the system
$$f_1(X)= 0,\dots,  f_r(X)= 0,X_{i_1}-\zeta_1=0,\dots, X_{i_k}- \zeta_k=0,$$ $$ l_1(X) -\zeta_{k+1}=0,\dots, l_{n-r-k}(X)-\zeta_{n-r}=0,$$
which is supported on $(\cA_1,\dots, \cA_r, \{0, e_{i_1}\},\dots, \{ 0, e_{i_k}\}, \Delta^{(n-r-k)})$. By Bernstein's Theorem, we conclude that
 $MV(\cA_1,\dots, \cA_r, \{0, e_{i_1}\},\dots, \{ 0, e_{i_k}\}, \Delta^{(n-r-k)})>0$.
\end{proof}

\begin{remark}
For $1\le i_1<\dots< i_k\le n$, we have that $$MV_n(\cA_1,\dots, \cA_r, \{0,e_{i_1}\},\dots, \{0,e_{i_k}\},\Delta^{(n-r-k)})= MV_{n-k} (\varpi(\cA_1),\dots, \varpi(\cA_r), \varpi(\Delta)^{(n-r-k)})$$ (see \cite[Lemma 6]{ST10}), where $\varpi: \R^n\to \R^{n-k}$ is the projection to the coordinates indexed by $\{1,\dots, n\} - \{i_1,\dots, i_k\}$. This implies that, in order to determine whether a set of $k$ variables is algebraically independent or not, it suffices to compute an $(n-k)$-dimensional mixed volume.
\end{remark}

\section{Algorithms}\label{sec:algorithms}

In this section we will present an algorithm to compute the Zariski closure of the projection of $V^*(\mathbf{f})$ to the first $\ell$ coordinates, where $\mathbf{f}$ is a generic polynomial system with given supports. First, we describe some subroutines.

\subsection{Subroutines}

The first subroutine we will use, which follows from Lemma \ref{mixedvolume}, computes a transcendence basis of $K(V^*(\mathbf{f}))$ containing a transcendence basis of $K(\overline{\pi(V^*(\mathbf{f}))})$.

\bigskip

\noindent\hrulefill

\noindent \textbf{Algorithm} \texttt{TransBasis}

\medskip
\noindent  INPUT: A family $\cA= (\cA_1,\dots, \cA_r)$ of finite subsets of $(\mathbb{Z}_{\ge
0})^n$ such that $\dim (\sum_{j\in J} \cA_j )\ge \# J$ for every $J\subset \{1,\dots, r\}$.
\begin{enumerate}

\item $TB := \emptyset$

\item $k:= 1$

\item while $\#TB < n-r$ do

\begin{enumerate}

\item  If $MV(\cA_1,\dots, \cA_r, \left(\{0,e_{i_j}\}\right)_{i_j \in TB},
\{0,e_{k}\},\Delta^{(n-r-\# TB-1)})> 0$, \\
$TB := TB \cup \{k\}$.
\item $k:= k+1$
\end{enumerate}
\end{enumerate}

\noindent OUTPUT: The set $TB= \{ i_1,\dots, i_{n-r}\}$  with $i_1 < \dots < i_{n-r}$ such that $\{X_{i_1},\dots, X_{i_{n-r}}\}$
is a transcendence basis of $K (V^*(\mathbf{f}))$  over $K$  and, for each $1\le j \le n-r$,  $\{X_{i_1},\dots, X_{i_j}\}$ is a maximal
algebraically independent subset of $\{X_{ 1},\dots, X_{i_j}\}$.
\medskip

\noindent \hrulefill

\bigskip

Note that the above algorithm requires to decide whether the mixed volume of a family of finite sets is non-zero for at most $n$ families. Following \cite[Theorem 8]{DGH98}, there is a polynomial time algorithm to achieve this task based on the matroid intersection algorithm from \cite{Edmonds70}. Therefore, Algorithm \texttt{TransBasis} runs in polynomial time.

\bigskip

Without loss of generality, by renaming variables, we may assume that  the transcendence basis of $K(V^*(\mathbf{f}))$ obtained by applying algorithm \texttt{TransBasis} is $\{X_1,\dots, X_t,$ $ X_{t+r+1},\dots, X_n\}$ with $t\le \ell$ and $\ell \le t+r$. Then, $\{ X_1,\dots, X_t\}$ is a transcendence basis of $K(\overline{\pi(V^*(\mathbf{f}))})$.
Let $\mathbf{f}_b$ be the polynomial system obtained by evaluating $X_{t+r+1},\dots, X_n$ in a generic point $b$. By Proposition \ref{prop:f_b}, we may obtain $\overline{\pi(V^*(\mathbf{f}))}$ using the system $\mathbf{f}_b$. In order to do this, we will compute first a geometric resolution of $V^*(\mathbf{f}_b)$ by means of the subroutine we introduce below.

Let $k$ be a field of characteristic $0$. Algorithm \texttt{ParametricToricGeomRes}
computes a geometric resolution of the variety $V^*(\mathbf{g})$ defined from a generic sparse system $\mathbf{g}:=(g_1,\dots, g_r)$ in $k[X_1,\dots, X_{t+r}]$ with given supports $\mathcal{S}:=(\mathcal{S}_1,\dots,\mathcal{S}_r)$, provided that $\{X_1,\dots, X_t\}$ is a set of independent variables for all its components.
This subroutine is obtained by following the parametric geometric resolution algorithm from \cite[Theorem 2]{Schost03} taking $X_1,\dots, X_t$ as the parameters.

 \bigskip

\noindent \hrulefill

\noindent \textbf{Algorithm} \texttt{ParametricToricGeomRes}

\medskip

\noindent INPUT: A generic sparse system $\mathbf{g}:=(g_1,\dots, g_r)$ in $k[X_1,\dots, X_{t+r}]$ with given supports $\mathcal{S}:=(\mathcal{S}_1,\dots,\mathcal{S}_r)$ such that $\{X_1,\dots, X_t\}$ is algebraically independent modulo each associated prime of $(g_1,\dots, g_r): (X_1\dots X_{t+r})^\infty$.

\begin{enumerate}
\item Choose $\xi= (\xi_1,\dots, \xi_t) $ at random with $\xi_i\in \Z-\{0\}$ for every $1\le i \le t$.
\item Compute a geometric resolution of the common solutions of $\mathbf{g}(\xi, X_{t+1},\dots, X_{t+r})$ in $(\overline k^*)^r$.
\item Obtain an slp encoding the polynomials in $\mathbf{g}$.
\item Apply a symbolic Newton-Hensel lifting (in the parameters $X_1,\dots, X_t$) to the geometric resolution obtained in step 2  with precision $2 MV(\mathcal{S}, \Delta^{(t)})$.
\item  Applying Pad\'e approximation to the output of the previous step, recover numerators and denominators in $k[X_1,\dots, X_t]$ for the coefficients of the polynomials in the geometric resolution of $V^*(\mathbf{g})$.
\end{enumerate}

\noindent OUTPUT: A geometric resolution of $V^*(\mathbf{g})$ with free variables $X_1,\dots, X_t$.

\medskip

\noindent\hrulefill

\bigskip

Before estimating the complexity of the previous algorithm, we present a simple example to illustrate how the algorithm works.

\bigskip

\noindent \textbf{Example.}
Let $\mathbf{g}$ be the following sparse system supported on $\mathcal{S}=(\mathcal{S}_1, \mathcal{S}_2)$, where $\mathcal{S}_1=\{(0,0,0), (1,1,0),(0,1,1)\}$ and $\mathcal{S}_2=\{(0,0,0), (2,1,1),(0,2,0),(1,1,1)\}$:

$$\mathbf{g}:=\begin{cases}
g_1= 2+3X_1X_2-X_2X_3\\
g_2= -1+2X_1^2X_2X_3+2X_2^2+X_1X_2X_3
\end{cases}$$
Here, $\{X_1\}$ is algebraically independent modulo $(g_1,g_2): (X_1X_2X_3)^\infty$.

In step 1, the algorithm chooses a value $\xi_1\in \Z-\{0\}$ at random and specializes $X_1= \xi_1$. Setting $\xi_1=1$, we obtain the system
$$\mathbf{g}_1:=\begin{cases}
g_{11}= 2+3X_2-X_2X_3\\
g_{21}= -1+3X_2X_3+2X_2^2
\end{cases}$$

Now, the algorithm computes a geometric resolution of the common
zeros of $\mathbf{g}_1$ in $(\C^*)^2$. This is a generic system
supported on $(\{(0,0),(1,0), (1,1)\}, \{(0,0),(1,1),(2,0)\})$.
Then, in order to solve it, we can apply the algorithm from
\cite[Section 5]{JMSW09}. Choosing $X_3$ as separating linear
form, we obtain
\begin{itemize}
\item $q_{X_3} (Y)= Y^2-\frac{12}{5}Y-\frac{1}{5}$
\item $w_2(Y)= -\frac54 Y - \frac34$
\item $w_3(Y) = Y$
\end{itemize}

From this geometric resolution, the algorithm computes a sufficiently good approximation of a geometric resolution of $V^*(\mathbf{g})$.

A geometric resolution of $V^*(\mathbf{g})$ consists in univariate polynomials with coefficients that are rational functions in the parameter $X_1$ with numerators and denominators of degrees bounded by $\deg V^*(\mathbf{g})$ (see \cite[Theorem 1]{Schost03}). These rational functions can be regarded as power series in the variable $X_1$ centered at $\xi_1=1$, and therefore, they can be recovered by means of Pad\'e approximation from sufficiently many terms of their expansions. The precision required to do this equals $2 \deg V^*(\mathbf{g}) = 2 MV(\mathcal{S}, \Delta)$.
 Since $MV(\mathcal{S}, \Delta)=6$, in step $4$, the algorithm applies a Newton-Hensel lifting (in the parameter $X_1$), as explained in \cite[Section 4.2]{Schost03}, to the geometric resolution $(q_{X_3}, w_2, w_3)$ with precision $12$, obtaining
\begin{itemize}
\item $\widehat q_{X_3} (Y)= Y^2 + q_1 Y+  q_0 $
\item $\widehat w_2(Y)=  w_{21} Y + w_{20} $
\item $\widehat w_3(Y) = w_{31} Y +w_{30}$
\end{itemize}
where
\begin{itemize}
\item $q_1= -\frac{12}{5}-\frac{18}{5}(X_1-1)+\frac{18}{25}(X_1-1)^2-\frac{24}{25}(X_1-1)^3+\frac{168}{125}(X_1-1)^4
    -\frac{48}{25}(X_1-1)^5+\frac{1728}{625}(X_1-1)^6-\frac{2496}{625}(X_1-1)^7+\frac{18048}{3125}(X_1-1)^8
    -\frac{26112}{3125}(X_1-1)^9+\frac{188928}{15625}(X_1-1)^{10}-\frac{273408}{15625}(X_1-1)^{11}+\frac{1978368}{78125}(X_1-1)^{12}$
\item $q_0= -\frac15-\frac{16}{5}(X_1-1)+\frac{119}{25}(X_1-1)^2-\frac{174}{25}(X_1-1)^3+\frac{1264}{125}(X_1-1)^4-
    \frac{1832}{125}(X_1-1)^5+\frac{13264}{625}(X_1-1)^6-\frac{768}{25}(X_1-1)^7+\frac{138944}{3125}(X_1-1)^8
    -\frac{201088}{3125}(X_1-1)^9+\frac{1455104}{15625}(X_1-1)^{10}-\frac{2105856}{15625}(X_1-1)^{11}+
    \frac{15238144}{78125}(X_1-1)^{12}$
\item $w_{21}= -\frac{5}{4}-\frac{5}{2}(X_1-1)-(X_1-1)^2$
\item $w_{20}= -\frac34-\frac34(X_1-1) $
\item $w_{31} = 1$
\item $w_{30} =0$
\end{itemize}
Finally, Pad\'e approximation is applied, following \cite[Corollaries 5.21 and 6.48]{vzGG99}, to each of the coefficients previously computed in order to obtain the numerators and denominators of the coefficients in the geometric resolution of $V^*(\mathbf{g})$ associated to the linear form $X_3$:
\begin{itemize}
\item $Q_{X_3}(Y) =  Y^2 + \frac{-12X_1^3-6X_1^2+6X_1}{4X_1^2+2X_1-1} Y+ \frac{-9X_1^2+8}{4X_1^2+2X_1-1}$
\item $W_2(Y)=  (-X_1^2-\frac12X_1+\frac14) Y -\frac{3}{4}X_1 $
\item $W_3(Y) =  Y$
\end{itemize}

\bigskip

Now we explain how the different steps of the algorithm are carried out in general and estimate the number of operations in $k$ it performs. We will use the notation $N=\sum_{1\le j \le r}\# \mathcal{S}_j$ and $d=\max_{1\le j \le r} \deg(g_j)$.

In Step 2, the algorithm computes the sparse encoding of $\mathbf{g}(\xi, X_{t+1},\dots, X_{t+r})$
within $O(r(t+r)N \log(d))$ operations. Then, this system is solved using the procedure from \cite[Section 5]{JMSW09} within complexity $O(r^3 N \log (d) M(\mathcal{M}) M(MV(\varpi(\mathcal{S})))(M(MV(\varpi(\mathcal{S})))+  M(\omega_{\max} \sum_{1\le h\le r} MV(\varpi(\mathcal{S}_j)_{j\ne h}, \Delta))))$, where
$\varpi$ is the projection to the last $r$ coordinates, $\omega_{\max}$ is the maximum of the values taken by a generic lifting function $\omega$ for $\varpi(\mathcal{S}):=(\varpi(\mathcal{S}_1), \dots, \varpi(\mathcal{S}_r ))$, and
$\mathcal{M}:= \max \{ || \mu || \}$, the maximum ranging over all primitive normal vectors to the mixed cells in the fine mixed subdivision of $\varpi(\mathcal{S})$ induced by $\omega$.

In Step 3, an slp of length $O((t+r)N\log(d))$ encoding the  polynomials $\mathbf{g}$ is obtained from their sparse representation.

The next step is performed modifying the procedure in \cite[Section 4.2]{Schost03} in order to use straight-line programs for computations with truncated multivariate power series. We represent each of these series as the vector of its homogeneous components and these components by means of straight-line programs. The required precision is $2 \deg(V^*(\mathbf{g}))= 2 MV(\mathcal{S}, \Delta^{(t)})$ (see \cite[Theorem 1]{Schost03} and \cite[Lemma 1]{HJS}). Therefore, this step is done within complexity
$O((r(t+r)N\log(d)+ r^4) M (MV(\varpi(\mathcal{S}))) MV(\mathcal{S}, \Delta^{(t)})^2)$ and produces an slp of the same order encoding the homogeneous components of the coefficients of the output.

Finally, the Pad\'e approximation to obtain the coefficients of the geometric resolution is done by reducing it to a univariate problem and solving it by means of subresultant computations following \cite[Corollaries 5.21 and 6.48]{vzGG99}. This step adds $O(rMV(\mathcal{S}, \Delta^{(t)})^{\Omega+2})$ operations to the previous complexity and $O(rMV(\mathcal{S}, \Delta^{(t)})^{\Omega+1})$ to the slp length.

Therefore, we have the following complexity result:

\begin{lemma}\label{comp:PTGR}
Let $\mathbf{g}:=(g_1,\dots, g_r)$ in $k[X_1,\dots, X_{t+r}]$ be a generic sparse system with  supports $\mathcal{S}:=(\mathcal{S}_1,\dots,\mathcal{S}_r)$ such that $\{X_1,\dots, X_t\}$ is algebraically independent modulo each associated prime of $(g_1,\dots, g_r): (X_1\dots X_{t+r})^\infty$. Algorithm \verb"ParametricToricGeomRes" computes a geometric resolution of $V^*(\mathbf{g})$.
With the previous notation, the total number of operations in $k$ performed by the algorithm is of order
\begin{equation*}
\begin{array}{c}
O((r^3+rt) N \log(d)  M(\mathcal{M}) M(MV(\varpi(\mathcal{S}))) (MV(\mathcal{S}, \Delta^{(t)})^2 +{} \\[2mm] {} + M(\omega_{\max} \sum_{1\le h\le r} MV(\varpi((\mathcal{S}_j)_{j\ne h}), \Delta)))+r MV(\mathcal{S}, \Delta^{(t)})^{\Omega + 2})
\end{array}\end{equation*}
The algorithm produces an slp of length
\begin{equation*}
O(rMV(\mathcal{S}, \Delta^{(t)})^2((t+r)N\log(d)+ r^3) M
(MV(\varpi(\mathcal{S})))+ MV(\mathcal{S},
\Delta^{(t)})^{\Omega-1})
\end{equation*} for the coefficients of the output.
\end{lemma}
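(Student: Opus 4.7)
The plan is to verify correctness and then sum up the per-step costs, most of which are already tracked in the paragraphs preceding the statement; what remains is to organize those observations.

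For correctness, I would first check that the random specialization at $\xi$ is meaningful. The algebraic independence of $\{X_1,\dots,X_t\}$ modulo every associated prime of $(g_1,\dots,g_r):(X_1\cdots X_{t+r})^\infty$ means that each irreducible component of $V^*(\mathbf{g})$ projects dominantly onto the coordinate subspace of the first $t$ variables. Hence for $\xi$ outside a proper Zariski-closed subset of $\overline k^{\,t}$, the specialized system $\mathbf{g}(\xi,X_{t+1},\dots,X_{t+r})$ is a generic zero-dimensional sparse system on $(\overline k^*)^r$ whose solution set is the fiber over $\xi$ of this dominant projection, and whose cardinality equals $MV(\varpi(\mathcal{S}))$ by Bernstein's theorem. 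The geometric resolution of this fiber returned by step~2 thus coincides with the specialization at $\xi$ of the geometric resolution of $V^*(\mathbf{g})$ with free variables $X_1,\dots,X_t$. By \cite[Theorem 1]{Schost03} combined with \cite[Lemma 1]{HJS}, the numerators and denominators of its coefficients are polynomials in $k[X_1,\dots,X_t]$ of degree at most $\deg V^*(\mathbf{g})\le MV(\mathcal{S},\Delta^{(t)})$; hence lifting to precision $2\, MV(\mathcal{S},\Delta^{(t)})$ in step~4 provides enough terms of the multivariate Taylor expansion at $\xi$ for the Padé step~5 to recover the coefficients exactly.

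For the complexity count, the preparation of the sparse encoding of $\mathbf{g}(\xi,\cdot)$ costs $O(r(t+r)N\log(d))$ and is absorbed by what follows. The call in step~2 to the sparse solver of \cite[Section 5]{JMSW09} on a system with supports $\varpi(\mathcal{S})$ contributes $O(r^3 N \log(d)\, M(\mathcal{M})\, M(MV(\varpi(\mathcal{S})))\, (M(MV(\varpi(\mathcal{S})))+M(\omega_{\max}\sum_{1\le h\le r} MV(\varpi((\mathcal{S}_j)_{j\ne h}),\Delta))))$ operations, while step~3 produces an slp of length $O((t+r)N\log(d))$ encoding the input at negligible arithmetic cost.

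The main obstacle, and the step that needs the most care, is step~4. The parametric Newton-Hensel procedure of \cite[Section 4.2]{Schost03} is stated for a single parameter; I would adapt it to the multivariate setting by representing every truncated power series in $X_1,\dots,X_t$ as the list of its homogeneous components and by encoding those components via straight-line programs obtained from repeated evaluation of the slp of $\mathbf{g}$ on symbolic inputs. Truncation at order $2\, MV(\mathcal{S},\Delta^{(t)})$ introduces a multiplicative overhead of $MV(\mathcal{S},\Delta^{(t)})^2$ in the count of scalar operations needed both for truncated-series products and for the inversion of the $r\times r$ Jacobian matrix, which yields the bound $O((r(t+r)N\log(d)+r^4)\, M(MV(\varpi(\mathcal{S})))\, MV(\mathcal{S},\Delta^{(t)})^2)$ for step~4 together with an slp of the same order for the homogeneous components of the coefficients of the lifted resolution. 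Finally, step~5 consists of $O(r)$ univariate Padé approximations obtained by subresultant computations as in \cite[Corollaries 5.21 and 6.48]{vzGG99}, contributing $O(r\, MV(\mathcal{S},\Delta^{(t)})^{\Omega+2})$ operations and $O(MV(\mathcal{S},\Delta^{(t)})^{\Omega+1})$ additional slp length. Adding the five contributions and absorbing the smaller terms yields precisely the stated bounds.
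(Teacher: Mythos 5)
Your proposal is correct and follows essentially the same route as the paper: it sums the same per-step costs (sparse specialization, the solver of \cite[Section 5]{JMSW09}, the slp encoding, the multivariate Newton--Hensel lifting with truncated series represented by homogeneous components encoded as slps, and the subresultant-based Pad\'e recovery), with the same precision bound $2\,MV(\mathcal{S},\Delta^{(t)})$ justified via \cite[Theorem 1]{Schost03} and \cite[Lemma 1]{HJS}. The only minor inaccuracy is your remark that the procedure of \cite[Section 4.2]{Schost03} is stated for a single parameter (it handles several); the paper's modification, like yours, concerns the slp-based handling of truncated multivariate power series, so nothing in your argument is affected.
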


The last step of our main algorithm consists in describing the projection  to a coordinate subspace of an equidimensional variety of dimension $t$ given by a geometric resolution in the case that the projection has the same dimension $t$. To do this, we apply the subroutine described below.

\bigskip

Let $V \subset \overline{k}^{t+r}$ be an equidimensional variety of dimension $t$ definable over $k$ and such that for
each irreducible component $W$
 of $V$, $I(W) \cap k[X_1,\dots, X_t] = \{0\}$ holds. Consider the projection $\pi: \overline{k}^{t+r} \to \overline{k}^{\ell}$ where $\ell > t$, $\pi (x_1,\dots, x_{t+r}) = (x_1,\dots, x_\ell)$. Note that $\{X_1,\dots, X_t\}$ are free variables with respect to each irreducible component of  $\overline{\pi(V)}$.
Let $\K:= k(X_1,\dots, X_t)$.
Suppose $\lambda \in k[X_{t+1},\dots, X_{t+r}]$ is a primitive element for $\K \otimes k[V]$ and let
$(q_\lambda, w_{t+1}, \dots, w_{t+r})\in \K[Y]^{r+1}$ be the associated geometric resolution. Let $D$ be the dimension of $\K \otimes  k[V]$ as $\K$-vector space.

Let $\mu = \mu_{t+1} X_{t+1}+ \dots + \mu_{\ell} X_{\ell}$ be a primitive element for $\K  \otimes  k[\overline{\pi(V)}]$.
As $I(\overline{\pi(V)}) = I(V) \cap k[X_1,\dots, X_\ell]$, to find the minimal polynomial of $\mu$ with respect to $\overline{\pi(V)}$, it suffices to find
a polynomial $q_{\mu} \in \K[Y]$ of minimal degree such that $q_\mu (\mu)\in \K \otimes I(V)$.
Then, $\delta := \deg_Y(q_\mu)$ is the dimension of $\K\otimes k[\overline{\pi(V)}]$ as a $\K$-vector space and so, for each $t+1 \le j \le \ell$,
in order to obtain a polynomial $v_j$ such that $X_{j} = v_j(\mu)$ in $ \K  \otimes k[\overline{\pi(V)}]$ it suffices to find a linear combination
$X_{j} = \sum_{i=0}^{\delta - 1} v_{ji} \mu^i$ of $\{1,\mu, \dots, \mu^{\delta - 1}\}$ in  $\K  \otimes  k[V]$. To do this, we use the basis
$B_\lambda:=\{1, \lambda, \dots, \lambda^{D-1}\}$ of $\K \otimes  k[V]$.

In order to compute the geometric resolution of $\overline{\pi(V)}$ associated with $\mu$, we first look for the minimal power $\mu^{\delta}$ which is a $\K$-linear combination of $\{1,\mu,\dots, \mu^{\delta-1}\}$ in $\mathbb{K}\otimes k[V]$.
Since $X_{j} = w_{j}(\lambda)$ for every $t+1\le j\le t+r$, we have that
$ \mu = \sum_{j=t+1}^{\ell} \mu_{j} w_{j}(\lambda)= p_\mu(\lambda)$, where $p_\mu(Y):=\sum_{j=t+1}^{\ell} \mu_{j} w_{j}(Y)$ and, for every $i\in \N$, $\mu^i = p_\mu(\lambda)^i = (p_\mu(Y)^i \mod q_\lambda(Y))|_{Y=\lambda}$. Therefore, $\delta$ equals the rank of the $D\times D$ matrix whose columns are the coefficients of the polynomials $(p_\mu(Y)^i \mod q_\lambda(Y))$ for $i=0,\dots, D-1$.

Then, we obtain the minimal polynomial $q_\mu(Y):=Y^\delta + \sum_{i=0}^{\delta -1} q_{\mu,i} Y^i$ of $\mu$ and the polynomials $v_j(Y)=\sum_{i=0}^ {\delta-1} v_{ji} Y^i$, $t+1\le j \le \ell$, which form the geometric resolution of $\overline{\pi(V)}$,  by solving the linear systems obtained by equating the coefficients of the different powers of $\lambda$ in the identities
$$p_{\mu^\delta} (\lambda) = \sum_{i=0}^{\delta-1} (-q_{\mu,i}) p_{\mu^i}(\lambda) \quad \hbox{ and }\quad w_{j}(\lambda) = \sum_{i=0}^{\delta-1} v_{ji}\,  p_{\mu^i}(\lambda),\ t+1\le j \le \ell.$$

Summarizing, with the previous notation and hypothesis, we have:

\bigskip

\noindent\hrulefill

\noindent \textbf{Algorithm} \texttt{GeomResProj}

\medskip

\noindent  INPUT: A geometric resolution  $(q_\lambda, w_{t+1}, \dots, w_{t+r})$ of $V$ with free variables $X_1,\dots, X_t$, and a linear form $\mu = \sum_{j=1}^{\ell-t} \mu_{t+j} X_{t+j} \in k[X_{t+1},\dots, X_{\ell}]$ which is
a primitive element for $\K  \otimes  k[\overline{\pi(V)}]$.
\begin{enumerate}
\item\label{1} Set $p_{\mu^0}(Y):=1$ and $p_{\mu}(Y):= \sum_{h=0}^{D-1} ( \sum_{j=t+1}^{\ell} \mu_{j} w_{j, h}) Y^h$, where $(w_{j,0},\dots, w_{j, D-1})=: \mathbf{w}_{j}$ is the vector of coefficients of $w_{j}(Y)$ for $j=t+1,\dots, t+r$.
\item For $i=2,\dots, D$, compute $p_{\mu^i}(Y):= ( p_\mu(Y) \cdot p_{\mu^{i-1}}(Y) \mod q_\lambda(Y))$
\item Compute $\delta:= \text{rank}(\mathbf{p}_{\mu^0},\mathbf{p}_\mu,\dots, \mathbf{p}_{\mu^{D-1}})$, where $\mathbf{p}_{\mu^i}\in \mathbb{K}^{D\times 1}$ denotes the vector of coefficients of $p_{\mu^i}$.

\item Set $\mathbf{P}:= (\mathbf{p}_{\mu^0},\mathbf{p}_\mu,\dots, \mathbf{p}_{\mu^{\delta-1}})\in \mathbb{K}^{D\times \delta}$.

\item Solve the linear systems $\mathbf{P} \cdot \mathbf{q} =  \mathbf{p}_{\mu^\delta}$ and
$\mathbf{P} \cdot \mathbf{v}_j =  \mathbf{w}_{j}$, $t+1\le j \le \ell$, to obtain
 $\mathbf{q}:= (q_0,\dots, q_{\delta-1})$ and $\mathbf{v}_j:= (v_{j,0},\dots, v_{j,\delta-1})$

\item Set $q_\mu(Y):= Y^\delta - \sum_{i=0}^{\delta-1} q_i Y^i$ and $v_j(Y):=\sum_{i=0}^{\delta-1} v_{ji} Y^i$  for every $t+1\le j \le \ell$.

\end{enumerate}
\noindent OUTPUT: The geometric resolution  $(q_\mu, v_{t+1}, \dots, v_{\ell})$ of the projection
$\overline{\pi(V)} \subset \overline{k}^\ell$ with free variables $X_1,\dots, X_t$ associated to the linear form $\mu$.

\noindent\hrulefill

\bigskip

The correctness of the procedure follows from our previous arguments. Now we estimate its complexity.
Assume that the input polynomials are encoded in dense form as degree $D$ univariate polynomials in $k(X_1,\dots, X_t)[Y]$ and their coefficients are encoded by an slp over $k$ of length $L$.

In Step 1, the algorithm computes an slp encoding the coefficients of $p_\mu$ of length bounded by $L+ 2 D (\ell-t)$.

In order to fulfill Step 2, we first compute recursively the powers $Y^h$ for $h=D,\dots, 2D-2$ modulo $q_\lambda(Y)$ from the coefficients of $q_\lambda$ and then, we obtain an slp of length $O(L +D(\ell-t)+D^3)$ for the coefficients of $p_{\mu^i}$ for $i=2,\dots, D$ by expanding the product $p_\mu(Y)\cdot p_{\mu^{i-1}}(Y)$ and substituting the powers of $Y$ by their previously computed expressions.

Step 3 is done probabilistically by choosing a point $x=(x_1,\dots, x_t)$ at random, evaluating the involved rational functions at this point within $O(L +D(\ell-t)+D^3)$ operations in $k$ and finally
 computing the rank $\delta$ of $(\mathbf{p}_{\mu^0}(x),\mathbf{p}_\mu(x),\dots, \mathbf{p}_{\mu^{D-1}}(x))$ within $O(D^\omega)$ operations in $k$, with $\omega<3 $ (see \cite[Chapter 2, Sec. 2]{BP94}).

To solve the linear systems involved in Step 5, the algorithm computes the invertible matrix $\mathbf{P}^t\mathbf{P}$ within $O(D\delta^2)$ operations in $k$, its adjoint matrix and determinant within $O(\delta^\Omega)$ additional operations, and the products $\textrm{adj}(\mathbf{P}^t\mathbf{P}) \textbf{p}_{\mu^\delta}$ and $\textrm{adj}(\mathbf{P}^t\mathbf{P}) \textbf{w}_{j}$ for $t+1\le j\le \ell$ with $O(\delta^2 (\ell-t))$.

Adding the previous estimates, we conclude that the algorithm produces an slp of length $O(L+D^3+ D\delta(\ell-t)+ \delta^\Omega)$ over $k$ within a complexity of the same order.  Taking into account that $\delta \le D$, we have:

\begin{lemma}\label{comp:GRP}
Let $V \subset \overline{k}^{t+r}$ be an equidimensional variety of dimension $t$ definable over $k$ and such that for
each irreducible component $W$
 of $V$, $I(W) \cap k[X_1,\dots, X_t] = \{0\}$ holds.  With the previous notation,
 given a geometric resolution of $V$  with free variables $X_1,\dots, X_t$, and a linear form $\mu $ which is
a primitive element for $\K  \otimes  k[\overline{\pi(V)}]$,  Algorithm  \verb"GeomResProj" computes
the geometric resolution of the projection
$\overline{\pi(V)} \subset \overline{k}^\ell$ with free variables $X_1,\dots, X_t$ associated to the linear form $\mu$ within
$O(L+D^\Omega + D^2(\ell - t))$ operations in $k$. The output of the algorithm is encoded by an slp of length of the same order.
\end{lemma}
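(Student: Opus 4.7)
The plan is to verify correctness and then carry out a step-by-step cost analysis of Algorithm \texttt{GeomResProj}, summing the contributions of each step.

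For correctness, the derivation preceding the algorithm already shows the key identities: since $X_j = w_j(\lambda)$ in $\mathbb{K}\otimes k[V]$ for $t+1\le j\le t+r$, we have $\mu = p_\mu(\lambda)$ and, more generally, $\mu^i = p_{\mu^i}(\lambda)$ where $p_{\mu^i}(Y) = (p_\mu(Y)^i \bmod q_\lambda(Y))$. Since $\{1,\lambda,\dots,\lambda^{D-1}\}$ is a $\mathbb{K}$-basis of $\mathbb{K}\otimes k[V]$, the number $\delta$ computed in Step 3 is the smallest integer such that $\mu^\delta \in \langle 1,\mu,\dots,\mu^{\delta-1}\rangle_\mathbb{K}$, hence equals the degree of the minimal polynomial of $\mu$ over $\mathbb{K}\otimes k[V]$. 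The identity $I(\overline{\pi(V)}) = I(V)\cap k[X_1,\dots,X_\ell]$ together with the fact that $\mu\in k[X_{t+1},\dots, X_\ell]$ is a primitive element of $\mathbb{K}\otimes k[\overline{\pi(V)}]$ ensures that the polynomial $q_\mu$ constructed in Step 6 is precisely the minimal polynomial of $\mu$ over $\overline{\pi(V)}$, and that the $v_j$ give the required parametrizations.

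For the complexity, I would proceed step by step, keeping track both of the number of arithmetic operations in $k$ and of the length of the slp that encodes the coefficients (which are rational functions in $X_1,\dots,X_t$). In Step 1, writing down the coefficient vector of $p_\mu(Y)$ costs $O(D(\ell-t))$ on top of $L$. In Step 2, I first precompute the reductions $Y^h\bmod q_\lambda(Y)$ for $h=D,\dots,2D-2$ by repeated multiplication by $Y$ followed by reduction using the coefficients of $q_\lambda$, which uses $O(D^2)$ operations; then each $p_{\mu^i}$ is obtained from $p_{\mu^{i-1}}$ by expanding the product $p_\mu\cdot p_{\mu^{i-1}}$ ($O(D^2)$ operations) and replacing each $Y^h$ with $h\ge D$ by its stored reduction ($O(D^2)$ operations), giving $O(D^3)$ total for Steps 1--2. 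Step 3 is handled probabilistically: evaluate the entries of the matrix at a random point $x\in k^t$ using the slp (cost $O(L+D^3+D(\ell-t))$), then compute the rank of the resulting $D\times D$ matrix over $k$ in $O(D^\omega)=O(D^\Omega)$ operations. Step 5 costs $O(D\delta^2)$ to form $\mathbf{P}^t\mathbf{P}$, $O(\delta^\Omega)$ for its adjoint and determinant, and $O(\delta^2(\ell-t)+D\delta(\ell-t))$ for the matrix-vector products yielding $\mathbf{q}$ and the $\mathbf{v}_j$. Using $\delta\le D$ throughout and summing, the total becomes $O(L+D^\Omega+D^2(\ell-t))$, with the same bound on the length of the output slp.

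The only point that needs a small amount of care is the rank computation in Step 3: doing it symbolically over $\mathbb{K}=k(X_1,\dots,X_t)$ would inflate the cost, so one replaces the problem by a numerical rank computation at a generic specialization of $X_1,\dots,X_t$, which is justified by the standard Zariski-open-set argument (the rank can only drop on a proper closed set, which a random point avoids with high probability). Beyond that, the proof is a routine bookkeeping exercise and I do not expect genuine obstacles; the bound $\delta\le D$ is immediate from the containment $k[\overline{\pi(V)}]\subset k[V]$.
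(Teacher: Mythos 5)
Your proposal is correct and follows essentially the same route as the paper: correctness is reduced to the discussion preceding the algorithm (minimal degree of $q_\mu$ via $I(\overline{\pi(V)})=I(V)\cap k[X_1,\dots,X_\ell]$ and the basis $\{1,\lambda,\dots,\lambda^{D-1}\}$), and the step-by-step cost accounting — $O(D(\ell-t))$ for $p_\mu$, $O(D^3)$ for the modular powers and products, a probabilistic rank computation at a random specialization in $O(D^\omega)$, and the normal-equation solve via $\mathbf{P}^t\mathbf{P}$, its adjoint and determinant in $O(D\delta^2+\delta^\Omega+\delta^2(\ell-t))$ — matches the paper's analysis, including the final simplification using $\delta\le D$ and the identical bound on the output slp length.
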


\subsection{An algorithm to find the projection}

Here we present a probabilistic algorithm
that, from a fixed family $\cA=(\cA_1,\dots, \cA_r)$ of finite sets $(\mathbb{Z}_{\ge 0})^n$ and a fixed family of
variables $X_1, \dots,X_\ell$, obtains a geometric resolution of
$\overline{\pi(V^*(\mathbf{f}))} \subset \overline K^\ell$, where $\mathbf{f}=(f_1,\dots, f_r)$ is defined in equation (\ref{eq:genericpols}) and $\pi:\overline K^n \to \overline K^\ell$ is the projection $\pi(x_1,\dots, x_n) = (x_1,\dots, x_\ell)$.

\bigskip

\noindent\hrulefill

\noindent \textbf{Algorithm} \texttt{$K$-Projection}

\medskip
\noindent  INPUT: A family $\cA= (\cA_1,\dots, \cA_r)$ of finite
subsets of $(\mathbb{Z}_{\ge 0})^n$  such that $\dim (\sum_{j\in J} \cA_j )\ge \# J$ for every $J\subset \{1,\dots, r\}$ and a set of variables $\{
X_1, \dots, X_\ell \} \subset \{ X_1, \dots, X_n \}$.
\begin{enumerate}
\item Apply Algorithm \texttt{TransBasis} to the family $\cA$.
Without loss of generality, we suppose the transcendence basis
obtained is $\{X_{1},\dots, X_{t}, X_{t+r+ 1}, \dots, X_{n}\}$ with $t \le \ell$ and $t+r+1 >
\ell$.
\item Choose randomly  in $\Z$ the entries of a vector $b = (b_{t+r+1},\dots, b_{n})$
and of a vector $(\lambda_{t+ 1},\dots, \lambda_{t+r})$.
\item Obtain the sparse representation of the system of polynomials

$\mathbf{f}_b= (f_1(X_1,\dots,X_{t+r}, b), \dots, f_r(X_1,\dots, X_{t+r},b))$ in $K[X_1, \dots,
X_{t+r}]$.
\item Apply Algorithm \texttt{ParametricToricGeomRes} to the system $\mathbf{f}_b$ and the variables $X_1,\dots, X_t$ to obtain  the
geometric resolution $(q_\lambda, w_{t+1}, \dots, w_{t+r})$ of the
variety $V^*(\mathbf{f}_b)$  with free variables $X_1,\dots, X_t$ associated to the linear form
$\lambda = \lambda_{t+ 1} X_{t+ 1}+ \dots + \lambda_{t+r} X_{t+r}$.
\item Choose randomly in $\Z$ the entries of a vector  $ (\mu_{t+ 1},\dots, \mu_{\ell}) .$
\item Apply Algorithm \texttt{GeomResProj} to
  $(q_\lambda, w_{t+1}, \dots, w_{t+r})$  and $\mu = \mu_{t+ 1} X_{t+1}+\dots+
\mu_{\ell} X_\ell$.
\end{enumerate}
\noindent OUTPUT: A geometric resolution $(q_\mu, v_{t+1}, \dots, v_{\ell})$
of  $\overline{\pi(V^*(\mathbf{f}))}
\subset \overline{K}^\ell$, where $\pi: \overline K^n\to \overline K^\ell$ is the projection  to the first coordinates.

\noindent\hrulefill

\bigskip

\begin{theorem}\label{thm:K-Proj}
Given  a family $\cA= (\cA_1,\dots, \cA_r)$ of finite
subsets of $(\mathbb{Z}_{\ge 0})^n$  such that $\dim (\sum_{j\in J} \cA_j )\ge \# J$ for every $J\subset \{1,\dots, r\}$ and the projection $\pi: \overline K^n\to \overline K^\ell$ to the first coordinates, Algorithm \emph{\texttt{$K$-Projection}} is a probabilistic procedure that computes a geometric resolution of $\overline{\pi(V^*(\mathbf{f}))}$ for the sparse system $\mathbf{f}$ supported on $\cA$ with indeterminate coefficients within $$O( (n^2 + r^3) N \log(d) M(\mathcal{D}) \Xi (\mathcal{D}^2 + M(\mathcal{E}) )  +r\mathcal{D}^{\Omega+2})$$
 operations in $K$, where $N= \sum_{1\le j \le r} \# \cA_j$, $d:=\max_{1\le j \le r} \{ \deg_X(f_j)\}$, $\mathcal{D}=MV(\cA, \Delta^{(n-r)})$, $\mathcal{E}= \sum_{1\le h \le r} MV( ((\cA_j)_{j\ne h}), \Delta^{(n-r+1)})$,  and $\Xi$ is a constant measuring the size of certain combinatorial objects  involved at intermediate computations and associated to the family of supports $\cA$.
 \end{theorem}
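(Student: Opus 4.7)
The plan is to argue correctness step by step and then bound the cost by gluing together the complexity estimates of the subroutines, verifying that the mixed volumes that appear in Lemmas \ref{comp:PTGR} and \ref{comp:GRP} are dominated by $\mathcal{D}$ and $\mathcal{E}$.

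For correctness, after Step 1 we may assume (by renaming) that $\{X_1,\dots,X_t,X_{t+r+1},\dots,X_n\}$ is a transcendence basis of $K(V^*(\mathbf{f}))$ and that $\{X_1,\dots,X_t\}$ is one of $K(\overline{\pi(V^*(\mathbf{f}))})$: this is exactly the output specification of \texttt{TransBasis}, justified by Lemma \ref{mixedvolume}. Proposition \ref{prop:f_b} gives a Zariski open set $\mathcal{O}\subseteq \overline K^{n-t-r}$ outside which any choice of $b$ in Step~2 makes $\mathbf{f}_b$ a system whose saturated ideal, intersected with $K[X_1,\dots,X_\ell]$, coincides with $I(\overline{\pi(V^*(\mathbf{f}))})$; in particular $V^*(\mathbf{f}_b)$ is equidimensional of dimension $t$ with $\{X_1,\dots,X_t\}$ free in each component. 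Hence the hypotheses of \texttt{ParametricToricGeomRes} are met in Step~4 and its output is a geometric resolution of $V^*(\mathbf{f}_b)$ with free variables $X_1,\dots,X_t$, provided the linear form $\lambda$ is separating (which holds for $\lambda$ outside a proper closed set, guaranteed generically by Step~2). Finally, since $I(\overline{\pi(V^*(\mathbf{f})})) = I(V^*(\mathbf{f}_b))\cap K[X_1,\dots,X_\ell]$, algorithm \texttt{GeomResProj} applied in Step~6 computes the desired geometric resolution, provided $\mu$ is a primitive element of $\K\otimes K[\overline{\pi(V^*(\mathbf{f}))}]$; again this is a generic condition on $(\mu_{t+1},\dots,\mu_\ell)$, achieved with high probability in Step~5.

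For the complexity, the cost of Steps~1--3 is negligible: \texttt{TransBasis} is polynomial time (remark following its description), while Step~3 reduces to $O(rN\log(d))$ operations for evaluating the numerical coefficients. The dominant contribution comes from Step~4, to which we apply Lemma \ref{comp:PTGR} with the supports $\mathcal{S}$ of $\mathbf{f}_b$, which are the projections $\varpi'(\cA_j)$ to the first $t+r$ coordinates. By [ST10, Lemma 6] invoked in the Remark following Lemma \ref{mixedvolume}, together with monotonicity of mixed volumes (replacing each $\Delta$ by the smaller set $\{0,e_i\}$ can only decrease $MV$), one has
\begin{equation*}
MV(\mathcal{S},\Delta^{(t)}) \le MV(\cA,\Delta^{(n-r)}) = \mathcal{D},
\end{equation*}
and similarly $\sum_h MV(\varpi(\mathcal{S}_j)_{j\neq h},\Delta) \le \mathcal{E}$. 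Absorbing the combinatorial factors $M(\mathcal{M})$ and $M(MV(\varpi(\mathcal{S})))$ into the constant $\Xi$ produces the first summand of the claimed bound. Lemma \ref{comp:GRP} applied in Step~6 with $D\le \mathcal{D}$ and the slp length $L$ inherited from Step~4 contributes $O(L+\mathcal{D}^\Omega + \mathcal{D}^2(\ell-t))$, which is absorbed by the estimate of Step~4 together with the $r\mathcal{D}^{\Omega+2}$ Padé term already present in Lemma \ref{comp:PTGR}.

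The main obstacle is the bookkeeping of genericity: one must argue that the finitely many proper Zariski closed sets to be avoided (the complement of $\mathcal{O}$ in Proposition \ref{prop:f_b}, the non-separating $\lambda$'s for $V^*(\mathbf{f}_b)$, the non-primitive $\mu$'s for $\overline{\pi(V^*(\mathbf{f}))}$, and the bad choices internal to \texttt{ParametricToricGeomRes} and to the rank probe inside \texttt{GeomResProj}) are all defined by polynomials of controlled degree, so that by Schwartz--Zippel the random integer choices in Steps~2 and~5 succeed with arbitrarily high probability when drawn from a sufficiently large range. Once these genericity checks are in place, the proof is a routine combination of the correctness and complexity statements of the two subroutines together with the monotonicity inequalities for mixed volumes indicated above.
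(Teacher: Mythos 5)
Your proposal is correct and takes essentially the same route as the paper: correctness is reduced to Proposition \ref{prop:f_b} and the specifications of the subroutines, and the complexity is obtained by adding the costs of Steps 3, 4 and 6, bounding the mixed volumes attached to the projected supports $\mathcal{S}$ by $\mathcal{D}$ and $\mathcal{E}$ (the paper invokes Bernstein's theorem where you use \cite{ST10} plus monotonicity, which is equally valid). The only cosmetic deviation is bookkeeping: the paper bounds $M(MV(\varpi(\mathcal{S})))$ by $M(\mathcal{D})$ (the same projection--monotonicity argument gives $MV(\varpi(\mathcal{S}))\le \mathcal{D}$) and reserves $\Xi$ for $M(\mathcal{M})M(\omega_{\max})$ only, rather than absorbing that mixed-volume factor into $\Xi$ as you suggest.
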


\begin{proof}
Since in our complexity estimates we only take into account the number of operations in $K$ (and not mixed volume or mixed subdivision computations), to obtain the overall complexity of the algorithm it suffices to add the complexities of steps 3, 4 and 6.

Step 3 can be done within $O(n^2 N\log(d))$ operations in $K$.

The complexity of Step 4 is the already stated for Algorithm
\texttt{ParametricToricGeomRes} in Lemma \ref{comp:PTGR}. Note
that for generic $b$, the system $\mathbf{f}_b$ is a generic
polynomial system supported on $\mathcal{S}:=(\mathcal{S}_1,\dots,
\mathcal{S}_r)$, where $\mathcal{S}_j\subset (\Z_{\ge 0})^{t+r}$
is the projection of $\cA_j$ to the first $t+r$ coordinates for
every $1\le j \le r$. Moreover, by Bernstein's theorem,
$MV(\mathcal{S}, \Delta^{(t)}) \le MV(\cA, \Delta^{(n-r)})$,
$MV(\varpi(\mathcal{S}))\le MV(\cA, \Delta^{(n-r)})$ and, for
every $1\le h \le r$, $MV(\varpi((\mathcal{S}_j)_{j\ne h}),
\Delta)\le MV(((\cA_j)_{j\ne h}), \Delta^{(n-r+1)})$. We take
$\Xi$ such that $M(\mathcal{M}) M(\omega_{\max}) \le \Xi$.

Finally, the complexity of Step 6 follows from Lemma \ref{comp:GRP}. Note that, here, $D\le MV(\cA, \Delta^{(n-r)})$ and $L$ is the length of the slp computed in Step 4 according to Lemma \ref{comp:PTGR}.
\end{proof}

\subsection{Example}\label{sec:example}

Consider a sparse system of $2$ polynomials in $5$ variables supported on
$\cA=(\cA_1, \cA_2)$, where $\cA_1=\{(0,0,0,0,0), (1,1,1,0,0),(2,0,0,4,2),(0,0,0,8,4)\}$ and \newline $\cA_2=\{(1,0,1,1,2),(0,1,2,5,4),(1,3,0,5,4)\}$, with indeterminate coefficients:
$$\mathbf{f}:=\begin{cases}
f_1= C_{11}+C_{12}X_1X_2X_3+C_{13} X_1^2 X_4^4 X_5^2+ C_{14} X_4^8 X_5^4\\
f_2= C_{21} X_1 X_3 X_4 X_5^2+ C_{22} X_2 X_3^2 X_4^5 X_5^4 + C_{23} X_1 X_2^3 X_4 ^5 X_5^4
\end{cases}$$
and the projection $\pi: \A^5 \to \A^3$, $\pi(x_1,x_2,x_3,x_4,x_5)=(x_1,x_2,x_3)$. We are going to show a geometric resolution of $\overline{\pi(V^*(\mathbf{f}))}$ following Algorithm \texttt{$K$-Projection}.

\bigskip

First, we apply Algorithm \texttt{TransBasis}
and we obtain that $\{X_1,X_2,X_4\}$ is a transcendence basis of $K(V^*(\mathbf{f}))$, and so, $\{X_1,X_2\}$ is a transcendence basis of $K(\overline{\pi(V^*(\mathbf{f}))})$.

Now, the algorithm chooses a value $b$ at random and specializes $X_4=b$. Set $b=1$. The specialized system is
$$\mathbf{f}_1:=\begin{cases}
f_{11}= C_{11}+C_{12}X_1X_2X_3+C_{13} X_1^2 X_5^2+ C_{14} X_5^4\\
f_{21}= C_{21} X_1 X_3  X_5^2+ C_{22} X_2 X_3^2 X_5^4 + C_{23} X_1 X_2^3  X_5^4
\end{cases}$$

The next step is to apply Algorithm \texttt{ParametricToricGeomRes} with free variables $X_1,X_2$. We choose $\lambda= X_5$ as the primitive element to obtain the geometric resolution:

\begin{itemize}
\item $q_{X_5}(Y)= Y^{10}+\frac{2C_{13}X_1^2}{C_{14}}Y^8+\frac{C_{13}^2X_1^4+2C_{11}C_{14}}{C_{14}^2}Y^6
+\frac{(-C_{12}C_{21}C_{14}+2C_{11}C_{22}C_{13})X_1^2}{C_{22}C_{14}^2}Y^4+ {}$\\ ${}\qquad \quad +\frac{-C_{12}C_{21}C_{13}X_1^4+C_{11}^2C_{22}+C_{12}^2C_{23}X_1^3X_2^4}{C_{22}C_{14}^2}Y^2-\frac{C_{12}C_{21}C_{11}X_1^2}{C_{22}C_{14}^2}$
\item $w_3(Y)= -\frac{C_{14}}{C_{12}X_1X_2} Y^4 - \frac{C_{13}X_1}{C_{12}X_2} Y^2 - \frac{C_{11}}{C_{12} X_1 X_2}$
\item $w_5(Y)=Y$
\end{itemize}

Finally, Algorithm \texttt{GeomResProj} is applied to $q_{X_5}, w_3, w_5$ and a primitive element $\mu$ for $K(\overline{\pi(V^*(\mathbf{f}))})$. In this case, we take $\mu= X_3$ and obtain

\begin{itemize}
\item $q_{X_3}(Y)= Y^5+\frac{C_{11}}{C_{12}X_1X_2}Y^4+\frac{2C_{12}C_{23}X_1X_2^4-C_{13}C_{21}X_1^2}{C_{12}C_{22}X_2^2}Y^3+
\frac{2C_{23}C_{22}C_{11}X_2^4+C_{21}^2C_{14}X_1}{C_{12}C_{22}^2X_2^3}Y^2+$\\
 ${}\qquad\quad + \frac{C_{12}C_{23}^2X_1^2X_2^4-C_{13}C_{21}C_{23}X_1^3}{C_{12}C_{22}^2}Y+\frac{C_{23}^2C_{11}X_1X_2^3}{C_{22}^2C_{12}}
$
\item $v_3(Y)=Y$
\end{itemize}

\subsection{Computation for generic rational coefficients.}

We are now going to show that, for a system with generic
rational coefficients, the same steps as the ones in Algorithm  \texttt{$K$-Projection} can be performed
using these rational coefficients to obtain a geometric resolution of the Zariski closure of the projection of the associated variety.

\bigskip

Let $\{X_1,\dots, X_t, X_{t+r+1},\dots, X_n\}$ be a transcendence basis of $K(V^*(\mathbf{f}))$ such that $\{X_1,\dots, X_t\}$ is a maximal algebraically independent subset of $\{X_1,\dots, X_\ell\}$.

 As shown in \cite[Appendix A]{DJOSS}, there exists a non-empty Zariski open set $\cU_1\subset \A^N$ such that if $c=(c_1,\dots, c_r)\in \cU_1\cap \Q^N$, then:
\begin{itemize}
\item the ideal $I_c:=(f_1(c,X), \dots, f_r(c,X)): (X_1\dots X_n)^\infty$ is radical equidimensional of dimension $n-r$,
\item $\{X_1,\dots, X_t, X_{t+r+1},\dots, X_n\}$ is algebraically independent modulo each of the associated primes of $I_c$,
\item for every $1\le k\le \ell-t$, $\{X_1,\dots, X_t, X_{t+k}\}$
    is algebraically dependent modulo $I_c$.
\end{itemize}

Assume $c\in \cU_1\cap \Q^N$ and consider the variety $V^*(\mathbf{f}(c)) \subset \A^n$. Let $W$ be an irreducible component of $V^*(\mathbf{f}(c))$. We have that $\dim(W) = n-r$ and $\{X_1,\dots, X_t,$ $ X_{t+r+1},\dots, X_n\}$ is a transcendence basis of $\Q(W)$. Then, the projection of $W$ over the last $n-t-r$ coordinates is a dominant map. Therefore, there is a Zariski open subset $\cO_W\subset \A^{n-t-r}$ such that, for every $b\in \cO_W \cap \Q^{n-t-r}$:
\begin{itemize}
\item $W_b:= W \cap \{x_{t+r+1}=b_{t+r+1},\dots, x_n=b_{n}\}$ is an equidimensional variety of dimension $t$,
\item $\{X_1,\dots, X_t\}$ is algebraically independent modulo $I(W_b)$.
\end{itemize}
Then, for every $b\in \cO_W\cap \Q^{n-t-r}$, the identity $\overline{\pi(W)} = \overline{\pi(W_b)}$ holds.

\bigskip

As the dimension of the set $ \partial (\mathbf{f}(c)) :=
V^*(\mathbf{f}(c)) - \{ x \in (\C^*)^n : \mathbf{f}(c,x)=0\}$ is
less than $n-r$, for every $\{i_1,\dots, i_t\} \subset \{1,\dots,
t+r\}$, there exists a non-zero polynomial $p_{i_1,\dots,
i_t}(X_{i_1},\dots, X_{i_t}, X_{t+r+1},\dots, X_n)$ vanishing
identically on this set. Then, there is a non-empty Zariski open
set $\cO_1\subset \A^{n-t-r}$ such that for every $b\in \cO_1$,
the dimension of $\partial (\mathbf{f}(c)) \cap \{x_{t+r+1}=
b_{t+r+1},\dots, x_n=b_{n}\}$ is less than $t$.

Then, for every $b\in \cO_1 \cap \bigcap_W \cO_W\cap (\Q^*)^{n-t-r}$, we have that
$$V^*(\mathbf{f}(c)) \cap \{x_{t+r+1}=  b_{t+r+1},\dots, x_n=b_n\} =
\overline{ \{ \hat x \in (\C^*)^{t+r} : \mathbf{f}(c,\hat x, b) =0\}\times \{ b\}}$$
and
$\pi(V^*(\mathbf{f}(c))) = \bigcup_W \pi(W) = \bigcup_W \pi(W_b) = \pi (V^*(\mathbf{f}(c)) \cap \{x_{t+r+1}= b_{t+r+1},\dots, x_n=b_n\});$
therefore,
$$\overline{\pi(V^*(\mathbf{f}(c)))} =\overline{ \pi (\{ \hat x \in (\C^*)^{t+r} : \mathbf{f}(c,\hat x, b) =0\}\times \{ b\})}.$$

\bigskip

For $j=1,\dots, r$, let $\mathcal{S}_j\subset (\Z_{\ge 0})^{t+r}$  be the projection of $\cA_j$ to the first $t+r$ coordinates. Then $f_j= \sum_{\hat a \in \mathcal{S}_j}  (\sum_{(\hat a,\tilde a)\in \cA_j} C_{j,(\hat a,\tilde a)} \tilde X^{\tilde a}) \hat X^{\hat a} $.
Algorithm \texttt{ParametricToricGeomRes} works for generic sparse polynomial systems $\mathbf{g}$ supported on $\mathcal{S} = (\mathcal{S}_1,\dots, \mathcal{S}_r)$,  that is, there is a polynomial $p_{\mathcal{S}}$ in the coefficients of the system such that it computes a geometric resolution of $V^*(\mathbf{g}(\hat c))$ for every vector of coefficients $\hat c$ with $p_{\mathcal{S}}(\hat c)\ne 0$. Let $\cU_2\subset \A^N$ be a non-empty Zariski open set such that, for every $c:=(c_1,\dots, c_r)$, the polynomial $ p_{\mathcal{S}}((\sum_{(\hat a,\tilde a)\in \cA_j} c_{j,(\hat a,\tilde a)} \tilde X^{\tilde a})_{1\le j \le r, \hat a\in \mathcal{S}_j})$  does not vanish identically.

For $c\in \cU_2\cap \Q^N$, there exists a non-empty Zariski open set $\cO_2\subset \A^{n-t-r}$ such that for every $b\in \cO_2 \cap \Q^{n-t-r}$, the algorithm \texttt{ParametricToricGeomRes} can be applied to the system $\mathbf{f}(c,\hat x, b)$.

We conclude that, for coefficient vectors $c\in \cU_1 \cap \cU_2 \cap (\Q^*)^N$, a  probabilistic algorithm \texttt{$\Q$-Projection} which follows the same steps as  Algorithm \texttt{$K$-Projection} can be applied in order to compute a geometric resolution of $\overline{\pi(V^*(\mathbf{f}(c)))}$. Taking into account the complexity estimates in Theorem \ref{thm:K-Proj}, this proves Theorem \ref{thm:Q-Proj}.

\bigskip

Finally, we show an example where, following the steps of the algorithm $\Q$-\texttt{Projection},
we obtain a geometric resolution of $\overline{\pi(V^*(\mathbf{f}))}$ for a sparse system $\mathbf{f}$
with rational coefficients.

\bigskip

\noindent \textbf{Example.}  Let $\mathbf{f}$ be the following sparse system with the same support family $\cA=(\cA_1, \cA_2)$ as in the example of Section \ref{sec:example}:
$$\mathbf{f}=\begin{cases}
f_1= 3+2X_1X_2X_3- X_1^2 X_4^4 X_5^2+ 5 X_4^8 X_5^4\\
f_2= 2 X_1 X_3 X_4 X_5^2-3 X_2 X_3^2 X_4^5 X_5^4 + 7 X_1 X_2^3 X_4
^5 X_5^4
\end{cases}$$
and let $\pi: \C^5 \to \C^3$ be the projection $\pi(x_1,x_2,x_3,x_4,x_5)=(x_1,x_2,x_3)$.

We use the same choices $b=1$, $\lambda=X_5$ and $\mu=X_3$
as in Section \ref{sec:example} and look at steps 4 and 6 of the algorithm.

In Step 4, the algorithm \texttt{ParametricToricGeomRes}
computes the geometric resolution of $V^*(\mathbf{f}_1)$ with free variables $X_1,X_2$ associated to the linear form  $\lambda$:

\begin{itemize}
\item $\widehat{q}_{X_5}(Y)= Y^{10}-\frac{2X_1^2}{5}Y^8+\frac{X_1^4+30}
{25}Y^6+\frac{2X_1^2}{75}Y^4-\frac{4X_1^4 -27+28X_1^3X_2^4}
{75}Y^2+\frac{4X_1^2}{25}$
\item $\widehat{w}_3(Y)= \frac{-5}{2X_1X_2} Y^4 + \frac{X_1}{2X_2} Y^2 -
\frac{3}{2 X_1 X_2}$

\item $\widehat{w}_5(Y)=Y.$
\end{itemize}

In Step 6, if Algorithm \texttt{GeomResProj} is applied to the geometric resolution obtained in Step 4 and the linear form $\mu$, the geometric resolution
$(\widehat{q}_{X_3},\widehat{v}_3)$ is obtained, where
\begin{itemize}
\item $\widehat{q}_{X_3}(Y)=
Y^5+\frac{3}{2X_1X_2}Y^4-\frac{14X_1X_2^4+ X_1^2}{3X_2^2}Y^3+\frac{-63X_2^4+ 10X_1}{9X_2^3}Y^2
+\frac{49X_1^2X_2^4+7X_1^3}{9}Y+ \frac{49X_1X_2^3}{6}$
\item $\widehat{v}_3(Y)=Y.$
\end{itemize}

This is, in fact, the geometric resolution of $ \overline{\pi(V^*(\mathbf{f}))} $ with free variables $X_1, X_2$ associated to the linear form $\mu=X_3$, as it can be checked, for instance, by applying a Groebner basis elimination based procedure.

\bigskip
\noindent
\textbf{Acknowledgements.} The authors thank the referees for their helpful comments and suggestions.

\end{document}